\newtheorem{thm}{Theorem}[section]
\newtheorem{lem}[thm]{Lemma}
\newtheorem{cor}[thm]{Corollary}
\newtheorem*{mthm}{Main Theorem}
\theoremstyle{definition}
\newtheorem{dfn}[thm]{Definition}
\def\Ac{\mathcal{A}}  \def\Cc{\mathcal{C}}
\def\C{\mathbb{C}}  \def\Fc{\mathcal{F}}
 \def\al{\alpha}
  \def\Mc{\mathcal{M}}
\def\Pc{\mathcal{P}}  \def\Oc{\mathcal{O}}
\def\R{\mathbb{R}}
\def\Uc{\mathcal{U}} \def\Vc{\mathcal{V}} \def\Wc{\mathcal{W}}
\def\Z{\mathbb{Z}}
\renewcommand\emptyset{\varnothing}
\newcommand{\sm}{\setminus}
\def\eps{\varepsilon}
\def\La{\Lambda}
\def\al{\alpha}
\def\be{\beta}
\def\ga{\gamma}
\def\om{\omega}
\def\la{\lambda}
\def\si{\sigma}
\def\ol{\overline}
\def\thu{\mathrm{TH}}
\def\Re{\mathrm{Re}}
\def\cu{\mathrm{CU}}
\def\uc{\mathbb{S}}
\def\bd{\mathrm{Bd}}
\def\le{\leqslant}
\def\ge{\geqslant}
\def\0{\emptyset}
\def\disk{\mathbb{D}}
\def\phd{\mathrm{PHD}}
\begin{document}
\date{November 11, 2014; revised November 30, 2015; second revision January 20, 2016}
\title[The Principal Hyperbolic Domain in the cubic case]
{Complementary components to the cubic Principal Hyperbolic Domain}

\author[A.~Blokh]{Alexander~Blokh}

\author[L.~Oversteegen]{Lex Oversteegen}

\author[R.~Ptacek]{Ross~Ptacek}

\author[V.~Timorin]{Vladlen~Timorin}

\address[Alexander~Blokh, Lex~Oversteegen and Ross~Ptacek]
{Department of Mathematics\\ University of Alabama at Birmingham\\
Birmingham, AL 35294}

\address[Vladlen~Timorin]
{Faculty of Mathematics\\
Laboratory of Algebraic Geometry and its Applications\\
National Research University Higher School of Economics\\
Vavilova St. 7, 112312 Moscow, Russia }

\address[Vladlen~Timorin]
{Independent University of Moscow\\
Bolshoy Vlasyevskiy Pereulok 11, 119002 Moscow, Russia}

\email[Alexander~Blokh]{ablokh@math.uab.edu}
\email[Lex~Oversteegen]{overstee@math.uab.edu}
\email[Ross~Ptacek]{rptacek@uab.edu}
\email[Vladlen~Timorin]{vtimorin@hse.ru}

\subjclass[2010]{Primary 37F45; Secondary 37F10, 37F20, 37F50}

\keywords{Complex dynamics; Julia set; polynomial-like maps;
laminations}


\begin{abstract}
We study the closure of the cubic Principal Hyperbolic Domain and
its intersection $\Pc_\la$ with the slice $\Fc_\la$ of the space of
all cubic polynomials with fixed point $0$ defined by the multiplier
$\la$ at $0$. We show that any bounded domain $\Wc$ of
$\Fc_\la\sm\Pc_\la$ consists of $J$-stable polynomials $f$ with
connected Julia sets $J(f)$ and is either of \emph{Siegel capture}
type (then $f\in \Wc$ has an invariant Siegel domain $U$ around $0$
and another Fatou domain $V$ such that $f|_V$ is two-to-one and
$f^k(V)=U$ for some $k>0$) or of \emph{queer} type (then a specially chosen
critical point of $f\in \Wc$ belongs to $J(f)$, the set $J(f)$
has positive Lebesgue measure, and carries an invariant line field).
\end{abstract}

\maketitle

\section{Introduction}
In this paper, we study topological dynamics of complex cubic
polynomials. We denote the \emph{Julia set} of a polynomial $f$ by
$J(f)$ and the \emph{filled Julia set} of $f$ by $K(f)$. Let us
recall classical facts about quadratic polynomials.
The Mandelbrot set $\Mc_2$, perhaps the most well-known mathematical
set outside of the mathematical community, can be defined as the set
of all complex numbers $c$ such that the sequence
$$
c,\quad c^2+c,\quad (c^2+c)^2+c,\dots
$$
is bounded. The numbers $c$ label polynomials $z^2+c$. Every
quadratic polynomial can be reduced to this form by an affine
coordinate change.

By definition, $c\in \Mc_2$ if the orbit of $0$ under $z\mapsto
z^2+c$ is bounded, i.e., $0\in K(z^2+c)$. Note that $0$ is the only
critical point of the polynomial $z^2+c$ in $\C$. Generally, the
behavior of critical orbits to a large extent determines the
dynamics of other orbits. For example, by a classical theorem of
Fatou and Julia, $c\in \Mc_2$ if and only if $K(z^2+c)$ is
connected. If $c\not\in \Mc_2$, then the set $K(z^2+c)$ is a Cantor
set.

The central part of the Mandelbrot set, the so called \emph{Principal
Hyperbolic Domain} $\phd_2$, is bounded by a cardioid called the
\emph{Main Cardioid}. By definition, the Principal
Hyperbolic Domain $\phd_2$ consists of all parameter values $c$ such
that the polynomial $z^2+c$ is hyperbolic, and the set $K(z^2+c)$ is
a Jordan disk (a polynomial of any degree is said to be {\em
hyperbolic} if the orbits of all its critical points converge to
attracting cycles). Equivalently, $c\in\phd_2$ if and only if
$z^2+c$ has an attracting fixed point. The closure of $\phd_2$
consists of all parameter values $c$ such that $z^2+c$ has a
non-repelling fixed point. As follows from the
Douady--Hubbard parameter landing theorem \cite{DH, Hu}, the
Mandelbrot set itself can be thought of as the union of the main
cardioid and \emph{limbs} (connected components of
$\Mc_2\sm\ol{\phd}_2$) parameterized by reduced rational fractions
$p/q\in (0,1)$.

This motivates our study of higher degree analogs of $\phd_2$ started
in \cite{bopt14}. More precisely, complex numbers $c$ are in one-to-one
correspondence with affine conjugacy classes of quadratic polynomials
(throughout we call affine conjugacy classes of polynomials
\emph{classes} of polynomials). Thus a natural higher-degree analog of
the set $\Mc_2$
is the 
{\em degree $d$ connectedness locus} $\Mc_d$ defined as the set of
classes of degree $d$ polynomials $f$, all of whose critical points do
not escape, or, equivalently, whose Julia set $J(f)$ is connected. The
\emph{Principal Hyperbolic Domain} $\phd_d$ of $\Mc_d$ is defined as
the set of classes of hyperbolic degree $d$ polynomials with Jordan
curve Julia sets. Equivalently, the class $[f]$ of a degree $d$
polynomial $f$ belongs to $\phd_d$ if all critical points of $f$ are in
the immediate attracting basin of the same attracting (or
super-attracting) fixed point. In \cite{bopt14} we describe properties
of cubic polynomials $f$ such that $[f]\in \ol\phd_d$; notice that
Theorem~\ref{t:prophd} holds for any $d\ge 2$.

\begin{thm}[\cite{bopt14}]\label{t:prophd}
If $[f]\in \ol{\phd}_d$, then $f$ has a non-repelling fixed point, no
repelling periodic cutpoints in $J(f)$, and all its non-repelling
periodic points, except at most one fixed point, have multiplier 1.
\end{thm}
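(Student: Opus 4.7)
The plan is to approximate $[f]$ by $[f_n]\in\phd_d$, so that each $f_n$ is hyperbolic with a unique attracting fixed point $q_n$ (of multiplier $\mu_n$, $|\mu_n|<1$) and all other cycles of $f_n$ repelling. The existence of a non-repelling fixed point of $f$ is then immediate: by compactness, a subsequential limit $q=\lim q_n$ is a fixed point of $f$ whose multiplier is a limit of values in the open unit disk, hence has modulus $\le 1$.

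For the absence of repelling periodic cutpoints, I would argue by contradiction. If $z\in J(f)$ is a repelling periodic cutpoint, then at least two external rays land at $z$. Both the repelling periodic orbit of $z$ and the rays landing on it are stable under small perturbations (via holomorphic motion of the B\"ottcher coordinate near infinity and the $\la$-lemma). Hence the perturbed point $z_n\in J(f_n)$ also has $\ge 2$ landing rays for $n$ large. But $J(f_n)$ is a quasicircle, so external rays land injectively on $J(f_n)$, a contradiction.

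For the multiplier condition, let $p$ be a non-repelling periodic point of $f$ of period $k$ with multiplier $\la$, and assume $p$ is not the distinguished fixed point. If $\la\ne 1$, then $p$ is a simple zero of $f^k(z)-z$, so by the implicit function theorem it persists as a sequence of periodic points $p_n\to p$ of period $k$ with multipliers $\la_n\to\la$. Since $p_n\ne q_n$ for $n$ large and $f_n$ has no other non-repelling cycle, $p_n$ is repelling, i.e.\ $|\la_n|>1$. Combined with $|\la|\le 1$, this forces $|\la|=1$, so the task reduces to ruling out values of $\la$ on the unit circle other than $1$.

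The main obstacle is precisely this last step: upgrading $|\la|=1$ to $\la=1$. My plan would be to show that an indifferent $\la\ne 1$ is incompatible with the structure of $\ol{\phd}_d$. If $\la$ is a nontrivial root of unity, then $p$ is parabolic with $\ge 2$ attracting petals, and parabolic bifurcation can split $p$ into multiple attracting cycles under a suitable perturbation; if $\la$ is irrational, a direct holomorphic perturbation of the multiplier pushes $|\la|$ below $1$, making $p$ attracting. In either case, one would construct a polynomial $g$ simultaneously close to $f$ and to some $f_n\in\phd_d$ for which $g$ has at least two distinct attracting cycles, contradicting the fact that a neighborhood of $f_n$ within the appropriate slice lies in a hyperbolic component carrying a single attracting cycle. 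Carrying out this bifurcation argument carefully, in particular working inside the correct parameter slice and controlling how many attracting cycles emerge near $p$, is the substantive step that the formal limiting arguments do not give for free.
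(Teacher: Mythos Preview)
This theorem is not proved in the present paper: it is quoted from \cite{bopt14}, with only the remark that the version stated here (about all non-repelling periodic points rather than only cutpoints) follows by ``literally repeating the same arguments'' of that reference. So there is no proof in this paper to compare your proposal against.

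On the merits of your sketch: the first two steps are sound and essentially standard. A limit of the attracting fixed points $q_n$ gives a non-repelling fixed point, and the stability of repelling periodic points together with the landing rays (this is exactly Lemma~\ref{l:rep} here, i.e.\ Lemma~B.1 of \cite{GM}) transports a putative repelling cutpoint of $f$ to a repelling cutpoint of $f_n$, impossible since $J(f_n)$ is a Jordan curve. Likewise, persistence of a simple periodic point with $\la\ne 1$ forces $|\la_n|>1$ for the approximants, hence $|\la|=1$.

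The genuine gap is in your last paragraph, and it is not merely a matter of ``carrying it out carefully.'' Your proposed contradiction is that a perturbation $g$ of $f$ with two attracting cycles would lie in the hyperbolic component of some $f_n$, which has only one. But the size of the neighborhood of $f_n$ contained in its hyperbolic component may shrink to zero as $n\to\infty$ (indeed it must, since $f$ is on the boundary), so there is no reason $g$ should land in that component; having two attracting cycles near a boundary point of $\ol{\phd}_d$ is not by itself contradictory. For irrational $\la$ you also cannot freely ``push $|\la|$ below $1$'' while simultaneously keeping the other fixed point non-repelling and staying in any prescribed slice: the multiplier of $p$ is a nonconstant holomorphic function on parameter space, but you need to control two multipliers at once. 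The argument in \cite{bopt14} proceeds differently, via rational laminations and the combinatorics of rays/wakes, rather than by a direct perturbation of multipliers; that machinery is what actually pins down $\la=1$.
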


Observe that, strictly speaking, in \cite{bopt14} we claim that all
non-repelling periodic \emph{cutpoints} in the Julia set $J(f)$, except
perhaps one, have multiplier 1; however, literally repeating the same
arguments one can prove the version of the results of \cite{bopt14}
given by Theorem~\ref{t:prophd} (i.e., we can talk about all
non-repelling periodic points of $f$, not only its periodic cutpoints).
Theorem~\ref{t:prophd} motivates the following definition; notice that
from now on in the paper we concentrate upon the cubic case (thus,
unlike Theorem~\ref{t:prophd}, Definition~\ref{d:cubio} deals with
cubic polynomials).

\begin{dfn}[\cite{bopt14}]\label{d:cubio}
Let $\cu$ be the family of classes of cubic polynomials $f$ with
connected $J(f)$ such that $f$ has a non-repelling fixed
point, no repelling periodic cutpoints in $J(f)$,  and all its
non-repelling periodic points, except at most one fixed point, have
multiplier 1. The family $\cu$ is called the \emph{Main Cubioid}.
\end{dfn}


Let $\Fc$ be the space of polynomials
$$
f_{\lambda,b}(z)=\lambda z+b z^2+z^3,\quad \lambda\in \C,\quad b\in \C.
$$
An affine change of variables reduces any cubic polynomial $f$ to
the form $f_{\lambda,b}$. Note that $0$ is a fixed point for every
polynomial in $\Fc$. The set of all polynomials $f\in\Fc$ such that
$0$ is non-repelling for $f$ is denoted by $\Fc_{nr}$ (in other
words, $\Fc_{nr}$ is the set of all polynomials $f_{\lambda,b}$ with
$|\la|\le 1$). Define the \emph{$\la$-slice} $\Fc_\lambda$ of $\Fc$
as the space of all polynomials $g\in\Fc$ with $g'(0)=\lambda$. The
space $\Fc$ maps onto the space of classes of all cubic polynomials
with a fixed point of multiplier $\lambda$ as a finite branched
covering. This branched covering is equivalent to the map $b\mapsto
a=b^2$, i.e., classes of polynomials $f_{\la,b}\in\Fc_\la$ are in
one-to-one correspondence with the values of $a$. Thus, if we talk
about, say, points $[f]$ of $\Mc_3$, then it suffices to take
$f\in\Fc_\lambda$ for some $\lambda$. There is no loss of generality
in that we consider only perturbations of $f$ in $\Fc$.

Assume that $J(f)$ is connected. In \cite{lyu83, MSS}, the notion of
\emph{$J$-stability} was introduced for any holomorphic family of
rational functions: a map is \emph{$J$-stable with respect to a
family of maps} if its Julia set admits an equivariant holomorphic
motion over some neighborhood of the map in the given family. We say
that $f\in \Fc_\la$ is \emph{stable} if it is $J$-stable with
respect to $\Fc_\lambda$ with $\lambda=f'(0)$, otherwise we say that
$f$ is \emph{unstable}. The set $\Fc^{st}_\la$ of all stable
polynomials $f\in \Fc_\la$ is an open subset of $\Fc_\la$. A
component of $\Fc^{st}_\la$ is called a \emph{$(\lambda$-$)$stable
component}.

\begin{dfn}\label{d:exte}
The \emph{extended closure $\ol{\phd}_3^e$ of the cubic Principal
Hyperbolic Domain $\ol{\phd}_3$} is the union of $\ol\phd_3$ and
classes of all polynomials from all $\la$-stable components $\La$
with $|\la|\le 1$ such that for all $b\in\bd(\La)$, we have
$[f_{\la,b}]\in\ol\phd_3$.
\end{dfn}

It turns out that properties of polynomials from $\ol\phd_3$ listed
in Theorem~\ref{t:prophd} are inherited by polynomials from the
extended closure $\ol{\phd}_3^e$.

\begin{thm}[\cite{bopt14}]\label{t:extendclo}
We have $\ol\phd_3^e\subset \cu$.
\end{thm}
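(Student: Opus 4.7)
The plan is to split cases according to the definition of $\ol\phd_3^e$: if $[f]\in\ol\phd_3$ then Theorem~\ref{t:prophd} gives $[f]\in\cu$ directly, so I focus on $f=f_{\la,b}$ lying in a $\la$-stable component $\La$, $|\la|\le 1$, with $\bd(\La)$ mapping into $\ol\phd_3$. Two of the four conditions defining $\cu$ are almost immediate: $J(f)$ is connected because the equivariant holomorphic motion over $\La$ (Ma\~n\'e--Sad--Sullivan) identifies $J(f)$ topologically with the connected Julia sets at $\bd(\La)$ (which lie in $\cu\supset\ol\phd_3$), and the fixed point at $0$ is non-repelling since $|\la|\le 1$.

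To rule out repelling periodic cutpoints, I would assume $p\in J(f)$ is one, so at least two external rays land at $p$. The cycle and the landing rays move holomorphically over $\La$, and the multiplier $\mu(b)$ of the cycle is holomorphic with $|\mu|>1$ throughout $\La$ (a repelling cycle cannot become non-repelling inside a $J$-stable component). The positive harmonic function $\log|\mu|$ cannot vanish on all of $\bd(\La)$, so some $b^*\in\bd(\La)$ has $|\mu(b^*)|>1$; by continuity of ray landings at repelling periodic points, $p_{b^*}$ is a repelling periodic cutpoint of $f_{\la,b^*}\in\ol\phd_3$, contradicting Theorem~\ref{t:prophd}.

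The crux is the multiplier condition. If $q$ is a non-repelling periodic cycle of $f$ with multiplier $\mu(b_0)\ne 1$, then by the implicit function theorem and $J$-stability, $q$ continues holomorphically to $q_b$ over $\La$ with multiplier $\mu:\La\to\cdisk$ (a non-repelling cycle cannot become repelling in a $J$-stable component), extending continuously to $\ol\La$. If $\mu(b^*)=1$ for every $b^*\in\bd(\La)$, then maximum modulus applied to the holomorphic function $1-\mu$ forces $\mu\equiv 1$, contradicting $\mu(b_0)\ne 1$. Otherwise some $b^*\in\bd(\La)$ has $\mu(b^*)\ne 1$, so Theorem~\ref{t:prophd} at $b^*$ forces $q_{b^*}$ to be the unique exceptional fixed point allowed there; hence $q$ itself is a fixed point, since periods are preserved under holomorphic continuation. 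For the \emph{at most one} clause, assume two distinct fixed points $q_1,q_2$ (possibly with $q_1=0$ and $\mu_1\equiv\la\ne 1$) satisfy $\mu_i(b_0)\ne 1$. Theorem~\ref{t:prophd} at each $b^*\in\bd(\La)$ allows only one exception, so $(1-\mu_1(b^*))(1-\mu_2(b^*))=0$; maximum modulus then promotes this vanishing to all of $\La$, and connectedness forces one factor to be identically zero, again contradicting $\mu_i(b_0)\ne 1$. The principal technical obstacle is establishing continuous extension of the multiplier functions to $\bd(\La)$ and ensuring $\La$ is well-behaved enough for maximum modulus to apply, both of which rely on the Ma\~n\'e--Sad--Sullivan theory underlying $J$-stability.
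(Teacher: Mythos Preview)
The paper does not actually prove Theorem~\ref{t:extendclo}; it is quoted from \cite{bopt14} and used as an input to the arguments in Section~4 (see the first case in the proof of Lemma~\ref{l:no3types}). So there is no proof here to compare your attempt against.

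That said, your plan is along the right lines and matches the spirit of how this paper handles closely related statements. In particular, your multiplier argument for ruling out extra neutral cycles is essentially the content of Lemma~\ref{l:noneut} (constancy of a holomorphic multiplier with values in $\partial\disk$ is impossible in the one-parameter family $\Fc_\la$), and your strategy of pushing a repelling cutpoint out to some $b^*\in\bd(\La)$ where it remains repelling, then invoking Theorem~\ref{t:prophd}, is exactly the mechanism used in the second case of Lemma~\ref{l:no3types}. Your product trick $(1-\mu_1)(1-\mu_2)\equiv 0$ on $\bd(\La)$ for the ``at most one'' clause is a nice touch.

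The technical obstacles you flag are real but manageable. Boundedness of $\La$ follows because $\Pc_\la$ is compact and the unbounded stable region in $\Fc_\la$ (where the critical point $\om_2$ escapes) cannot have its entire boundary in $\Pc_\la$. Continuous extension of the multiplier is automatic since periodic points of bounded period are roots of a polynomial equation with coefficients depending polynomially on $b$; the only subtlety is a possible period drop at $b^*$, but that forces $\mu(b^*)=1$ (a multiple root of $g^{\circ n}(z)-z$), which you can fold into your case analysis. For the cutpoint argument, the direction of Lemma~\ref{l:rep} you need is the correct one: apply it at $b^*$ (where you have arranged the cycle to be repelling) to conclude that the rays landing together near $b^*$ in $\La$ must also land together at $b^*$. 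Connectedness of $J(f)$ for $f\in\La$ follows from the maximum principle applied to the Green function at the critical points, once boundedness of $\La$ is established.
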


In \cite{bopt14b} we studied polynomials $f\in \Fc_{nr}$ that have
connected quadratic-like Julia sets containing $0$. Lemma
\ref{l:2crpts}, Theorem \ref{t:princ} and Corollary \ref{c:bdd-stab}
are proven in \cite{bopt14b}; we include them in this paper for the sake of
completeness.

Our aim is to continue to study properties of $\ol\phd_3$. For a
compact set $X\subset \C$, define the \emph{topological hull
$\thu(X)$} of $X$ as the union of $X$ with all bounded components of
$\C\setminus X$. 
We will write $\Pc_\la$ for the set of polynomials $f\in\Fc_\la$
such that $[f]\in\ol\phd_3$. In our Main Theorem we
describe the dynamics of polynomials $f$ belonging to bounded
components of the set $\thu(\Pc_\la)\sm \Pc_\la$ where $|\la|\le 1$.

Consider a cubic polynomial $f$ with a non-repelling fixed point
such that $[f]$ does not belong to $\ol\phd_3$; we call such
polynomials \emph{potentially renormalizable}. Whenever we talk
about a potentially renormalizable polynomial, we always assume that
it has a non-repelling fixed point. In that case we may assume that
$f\in \Fc_{nr}$ (i.e., we have $f(z)=f_{\lambda,b}(z)=\lambda z+b z^2+z^3$ with
$|\la|\le 1$).

Let $\Ac=\bigcup_{|\la|<1} \Fc_\la$.
For each $g\in \Ac$, let $A(g)$ be the basin of $0$.
Perturbing a potentially renormalizable polynomial $f\in \Fc_{nr}$ to a polynomial $g\in \Ac$,
we see that $g|_{A(g)}$ is two-to-one (otherwise $[f]\in \ol\phd_3$). We use
this in Lemma~\ref{l:2crpts} to show that a potentially renormalizable polynomial $f\in \Fc_{nr}$
has two distinct critical points. A critical
point $c$ of $f$ is said to be \emph{principal} if there is a
neighborhood $\Uc$ of $f$ in $\Fc$ and a holomorphic function
$\omega_1:\Uc\to\C$ defined on $\Uc$ such that $c=\omega_1(f)$, and,
for every $g\in\Uc\cap\Ac$, the point $\omega_1(g)$ is the critical
point of $g$ contained in $A(g)$. By Theorem~\ref{t:princ}, if $f\in \Fc_{nr}$
is potentially renormalizable, then the
point $\om_1(f)$ is well-defined; let the other critical point of
$f$ be $\om_2(f)$. It is easy to see that $\om_1(f)\in K(f)$.

\begin{dfn}\label{d:compotypes}
Let $\Wc$ be a bounded component of $\thu(\Pc_\la)\sm\Pc_\la$,
where $|\la|\le 1$. Then $\Wc$ is said to be of
\emph{Siegel capture} type if any $f\in \Wc$ has an invariant Siegel
domain $U$ around $0$ and another Fatou domain $V$ such that $f|_V$
is two-to-one and $f^{\circ k}(V)=U$ for some $k>0$.
Also, $\Wc$ is
said to be of \emph{queer} type if the set $J(f)$ contains the
critical point $\om_2(f)$.
In this case, it can be shown (Theorem \ref{t:nosiegel}) that $J(f)$ has positive Lebesgue measure, and carries an invariant line field.
\end{dfn}

Observe that polynomials from components of Siegel capture type and
from components of queer type have connected Julia sets.

\begin{mthm} Let $\Wc$ be a bounded component of
$\thu(\Pc_\la)\sm \Pc_\la$, where $|\la|\le 1$.  Then
any polynomial $f\in \Wc$ is stable and has neither repelling
periodic cutpoints nor neutral periodic points distinct from $0$ in
$K(f)$. Moreover, $\Wc$ is either of \emph{Siegel capture} type or
of \emph{queer} type.
\end{mthm}

\medskip

{\footnotesize \emph{Notation and Preliminaries:} we write $\ol A$
for the closure of a subset $A$ of a topological space and $\bd(A)$
for the boundary of $A$; the $n$-th iterate of a map $f$ is denoted
by $f^{\circ n}$. We let $\C$ stand for the complex plane, $\C^*$
for the Riemann sphere, $\disk$ for the open unit disk in $\C$ centered at
$0$, and
$\uc=\bd(\disk)$ for the unit circle. We identify the unit circle
$\uc$ with $\R/\Z$ and denote by $\ol{\be\ga}$ the chord with
endpoints $\be$, $\ga\in \uc$. The $d$-tupling map of the unit
circle is denoted by $\si_d$. We will talk about principal sets of
arbitrary continuous paths $\ga:(0,\infty)\to\C$ such that
$\lim_{t\to\infty}\gamma(t)=\infty$, not necessarily external rays.
The principal set of $\ga$ is defined as
$\bigcap_{\eps>0}\ol{\ga(0,\eps)}$. We also assume knowledge of
basic notions from complex dynamics, such as \emph{Green function,
dynamic rays $($of specific argument$)$, B\"ottcher coordinate,
Fatou domain, repelling, attracting, neutral periodic points,
parabolic, Siegel, Cremer periodic points, impressions, principal
sets}, and the like (see, e.g., \cite{Mc}). }

\section{Critical points of potentially renormalizable polynomials}\label{s:notinphd}





Throughout Section~\ref{s:notinphd}, we consider a potentially
renormalizable polynomial $f=f_{\la, b}$ with $|\la|\le 1$. Recall
that, if $g\in \Ac$ is close to $f$, then $f|_{A(g)}$ is two-to-one
and contains exactly one critical point of $g$ denoted by
$\om_1(g)$. Let $\om_2(g)$ be the other critical point of $g$. Thus,
maps $g\in\Ac$ close to $f$ have two distinct critical points
with very different properties. Consistently approximating $f$ by
polynomials $g\in \Ac$, we can distinguish between critical points
of $f$ as well.

\begin{lem}[\cite{bopt14b}]
  \label{l:2crpts}
The polynomial $f$ has two distinct critical points.
\end{lem}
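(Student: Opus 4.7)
My plan is to argue by contradiction. Suppose $f=f_{\la,b}$ has a single critical point. Since $f'(z)=3z^2+2bz+\la$, the discriminant $4b^2-12\la$ must vanish, equivalently $b^2=3\la$, and the unique critical point is $c_0=-b/3$, of multiplicity two. My goal is to deduce $[f]\in\ol\phd_3$, contradicting potential renormalizability.

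First I treat the case $|\la|<1$ directly. Then $0$ is an attracting fixed point of $f$, and Fatou's classical theorem forces the immediate basin $A(f)$ of $0$ to contain a critical point. Since $c_0$ is the only critical point of $f$, we must have $c_0\in A(f)$. Thus every critical point of $f$ lies in the immediate basin of the same attracting fixed point, which by the equivalent characterization of $\phd_3$ recorded in the introduction gives $[f]\in\phd_3\subset\ol\phd_3$, the desired contradiction. (Equivalently, this says $f|_{A(f)}$ is a proper self-map of degree three, directly contradicting the perturbation observation in the paragraph just before the lemma that $g|_{A(g)}$ is two-to-one for any $g\in\Ac$ close to a potentially renormalizable polynomial, applied to $g=f$.)

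For the case $|\la|=1$ I reduce to the previous one along the codimension-one subvariety $V=\{(\la',b')\in\Fc:b'^2=3\la'\}\subset\Fc$ of parameters with coincident critical points. The degenerate possibility $\la=0$ forces $b=0$ and $f(z)=z^3\in\phd_3$ outright, so assume $\la\ne 0$. Pick $\la_n\to\la$ with $|\la_n|<1$ and set $b_n=\pm\sqrt{3\la_n}$ with the branch chosen so that $b_n\to b$; each $f_n=f_{\la_n,b_n}$ lies in $V\cap\Ac$, has a unique (double) critical point $-b_n/3$, and has $0$ as an attracting fixed point. By the previous paragraph $[f_n]\in\phd_3$, and since $f_n\to f$ in $\Fc$ we conclude $[f]\in\ol\phd_3$, the required contradiction.

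The only mildly delicate step is arranging the perturbation \emph{inside} $V$ so that the coincident-critical-point structure is preserved while $|\la_n|$ drops below $1$; once this is set up, Fatou's theorem combined with the critical-point characterization of $\phd_3$ yields the contradiction immediately, and in particular there is no need to verify the Jordan-curve property of $J(f_n)$ by hand.
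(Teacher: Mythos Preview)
Your proof is correct and follows essentially the same approach as the paper's: both argue by contradiction, restrict to the algebraic curve of polynomials in $\Fc$ with a double critical point, perturb along this curve to make $0$ attracting, and then invoke Fatou's theorem to place the unique (double) critical point in the immediate basin of $0$, forcing $[f]\in\ol\phd_3$. The only cosmetic difference is that the paper handles your two cases uniformly by observing that $g\mapsto g'(0)$ is a non-constant holomorphic function on this curve, whereas you treat $|\la|<1$ directly and then approximate when $|\la|=1$.
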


\begin{proof}
Assume that $\omega(f)$ is the only critical point of $f$
(then it has multiplicity two). Let $\Cc$ be the space of all
polynomials $g\in \Fc$ with a multiple critical point $\omega(g)$.
This is an algebraic curve in $\Fc$ passing through $f$. The map
taking $g\in\Cc$ to $g'(0)$ is a non-constant holomorphic function.
Hence there are polynomials $g\in\Cc$ arbitrarily close to $f$, for
which $|g'(0)|<1$. The class of any such polynomial $g$ belongs to
$\phd_3$ as the immediate basin of $0$ with respect to $g$ must
contain the multiple critical point $\omega(g)$, contradicting
our assumption on $f$.
\end{proof}

We are ready to give the following definition.

\begin{dfn}\label{d:principo}
A critical point $c$ of $f$ is said to be \emph{principal} if there
is a neighborhood $\Uc$ of $f$ in $\Fc$ and a holomorphic function
$\omega_1:\Uc\to\C$ defined on this neighborhood such that
$c=\omega_1(f)$, and, for every $g\in\Uc\cap\Ac$, the point
$\omega_1(g)$ is the critical point of $g$ contained in $A(g)$.
\end{dfn}

Now let us prove Theorem~\ref{t:princ}.

\begin{thm}[\cite{bopt14b}]
  \label{t:princ}
There exists a unique principal critical point of $f$.
\end{thm}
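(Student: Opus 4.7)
The plan is to use the implicit function theorem to express the two critical points of nearby polynomials as holomorphic functions, and then single out the principal one by a connectedness argument on $\Uc\cap\Ac$. By Lemma~\ref{l:2crpts}, $f$ has two distinct critical points $c_1,c_2$, and since the discriminant of $3z^2+2bz+\la$ does not vanish at $f$, I can shrink a neighborhood $\Uc$ of $f$ in $\Fc$ (taken as a polydisk in the coordinates $(\la,b)$) to obtain two holomorphic functions $\omega^{(1)},\omega^{(2)}:\Uc\to\C$ that parametrize the critical points, with $\omega^{(i)}(f)=c_i$. For such a polydisk, $\Uc\cap\Ac=\{g\in\Uc:|g'(0)|<1\}$ is non-empty and connected, regardless of whether $|\la|<1$ or $|\la|=1$.

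For existence, I exploit that for every $g\in\Uc\cap\Ac$ the restriction $g|_{A(g)}$ is two-to-one (as noted in the excerpt), so $A(g)$ contains exactly one of $\omega^{(1)}(g),\omega^{(2)}(g)$. This gives a partition $\Uc\cap\Ac=E_1\sqcup E_2$ with $E_i=\{g:\omega^{(i)}(g)\in A(g)\}$. Each $E_i$ is open: if $\omega^{(i)}(g_0)\in A(g_0)$, then some iterate $g_0^{\circ n}(\omega^{(i)}(g_0))$ lies in a small disk around $0$ on which $g_0$ is strictly contracting; the same disk remains contracting for every $g\in\Ac$ sufficiently close to $g_0$, and by continuity $g^{\circ n}(\omega^{(i)}(g))$ lies in it as well. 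Connectedness of $\Uc\cap\Ac$ then forces one of $E_1,E_2$ to be empty, say $E_2$, so $\omega_1:=\omega^{(1)}$ is a principal function and $c_1=\omega_1(f)$ is a principal critical point of $f$.

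For uniqueness, assume that both $c_1$ and $c_2$ are principal, witnessed by functions $\omega_1^{(1)}:\Uc_1\to\C$ and $\omega_1^{(2)}:\Uc_2\to\C$. On the non-empty open set $\Uc_1\cap\Uc_2\cap\Ac$, both $\omega_1^{(1)}(g)$ and $\omega_1^{(2)}(g)$ are critical points of $g$ lying in $A(g)$; since $g|_{A(g)}$ is two-to-one, $A(g)$ contains only one critical point, forcing $\omega_1^{(1)}\equiv\omega_1^{(2)}$ on that set. The identity principle extends the equality to the connected component of $\Uc_1\cap\Uc_2$ containing $f$, giving the contradiction $c_1=\omega_1^{(1)}(f)=\omega_1^{(2)}(f)=c_2$. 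The only real obstacle is verifying openness of $E_i$ and connectedness of $\Uc\cap\Ac$ when $|\la|=1$; both are handled by working in a polydisk and using the elementary contracting-neighborhood argument above, with no appeal to any deeper stability result.
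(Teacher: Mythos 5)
Your proof is correct and takes essentially the same route as the paper: holomorphic parametrization of the two critical points (justified by Lemma~\ref{l:2crpts}), connectedness of $\Uc\cap\Ac$, openness of the sets where a given critical branch lies in $A(g)$, and the potential-renormalizability hypothesis to exclude maps in $\phd_3$. The only difference is one of framing: you first invoke that, for all $g\in\Uc\cap\Ac$ with $\Uc$ small, $g|_{A(g)}$ is two-to-one (so $E_1\sqcup E_2$ is a disjoint decomposition and connectedness directly forces one set to be empty), whereas the paper proceeds by contradiction, assuming neither critical point is principal, showing $\Oc_1\cap\Oc_2\neq\emptyset$ by connectedness, and then observing that $\Oc_1\cap\Oc_2$ consists of maps with class in $\phd_3$, contradicting $[f]\notin\ol\phd_3$. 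These are logically the same argument reorganized; your version is slightly more direct but leans on the two-to-one claim upfront (which is itself justified by the same $\ol\phd_3$ contradiction, so there is no gain in generality), while the paper's version keeps the $\ol\phd_3$ appeal localized to the final step. Your explicit uniqueness argument via the identity principle is a nice elaboration of the paper's one-line remark.
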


\begin{proof}
By Lemma \ref{l:2crpts}, the two critical points of $f$ are
different. Then there are two holomorphic functions, $\omega_1$ and
$\omega_2$, defined on a convex neighborhood $\Uc$ of $f$ in $\Fc$,
such that $\omega_1(g)$ and $\omega_2(g)$ are the critical points of
$g$ for all $g\in\Uc$. Suppose that neither $\omega_1(f)$, nor
$\omega_2(f)$ is principal. Then, arbitrarily close to $f$, there
are cubic polynomials $g_1$ and $g_2\in\Ac$ with
$\omega_2(g_1)\not\in A(g_1)$ and $\omega_1(g_2)\not\in A(g_2)$.
Since $A(g_i)$ contains a critical point for $i=1,2$, we must have
that $\omega_i(g_i)\in A(g_i)$.

The set $\Ac$ is convex. Therefore, the intersection $\Uc\cap\Ac$ is
also convex, hence connected. Let $\Oc_i$, $i=1$, $2$, be the subset
of $\Uc\cap\Ac$ consisting of all polynomials $g$ with
$\omega_i(g)\in A(g)$. By the preceding paragraph, $g_1\in \Oc_1$
and $g_2\in \Oc_2$. We claim that $\Oc_i$ is open. Indeed, if
$g\in\Oc_i$, then there exists a Jordan disk $U\subset A(g)$ with
$g(U)$ compactly contained in $U$, and $\omega_i(g)\in U$. If
$\tilde g\in\Uc\cap\Ac$ is sufficiently close to $g$, then $\tilde
g(U)$ is still compactly contained in $U$, and $\omega_i(\tilde g)$
is still in $U$, by continuity. It follows that $U\subset A(\tilde
g)$, in particular, $\omega_i(\tilde g)\in A(\tilde g)$. Thus,
$\Oc_i$ is open. Since $\Oc_1, \Oc_2$ are open and non-empty, the
set $\Uc\cap\Ac$ is connected, and
$$
\Uc\cap\Ac=\Oc_1\cup\Oc_2,
$$
the intersection $\Oc_1\cap\Oc_2$ is nonempty. Note that
$\Oc_1\cap\Oc_2$ consists of polynomials, whose classes are in
$\phd_3$. Since $\Uc$ can be chosen arbitrarily small, it follows
that $f$ can be approximated by maps $g\in\Ac$ with $[g]\in\phd_3$, a contradiction.

The existence of a principal critical point of $f$ is thus proved.
The uniqueness follows immediately from our assumption on $f$.
\end{proof}

Denote by $\omega_1(f)$ the principal critical point of $f$.
Obviously, $\om_1(f)\in K(f)$. Let $\om_2(f)$ be the other critical
point of $f$. For $g_{\lambda,b}(z)=\lambda z+b z^2+z^3$ with
$|\la|\le 1$ (i.e., for $g\in \Fc_{nr}$) sufficiently close to $f$,
the point $\omega_1(g)$ is a holomorphic function of $g$.

\section{Parabolic dynamics} \label{s:parab}

Suppose that $0$ is a parabolic point of $f$ with rotation number
$p/q$, i.e., we have $f'(0)=\exp(2\pi i p/q)$. It follows that
$f^{\circ q}(z)=z+az^{m+1}+O(z^{m+2})$ for small $z$ and some
non-zero coefficient $a$, where $m=q$ or $2q$. A \emph{repelling
vector} is defined as a vector $v\in\C$ such that $av^m$ is a positive
real number. Repelling vectors define $m$ straight rays originating
at $0$. These rays divide the plane into $m$ open \emph{attracting
sectors}. Let $S$ be an attracting sector, and $D$ a small round
disk centered at $0$. The map $z\mapsto z^{-m}$ is defined on $S\cap
D$ and takes $S\cap D$ into a subset of the plane containing the
half-plane $\Re(w\ol a)<-M$ for some big $M>0$. Moreover, the map
$z\mapsto z^{-m}$ conjugates $f^{\circ q}|_{S\cap D}$ with a map $F$
asymptotic to $w-ma$ as $w\to\infty$. If $M$ is big enough, then $F$
takes the half-plane $\Re(w\ol a)<-M$ into itself. An
\emph{attracting petal} $P$ of $f$ at $0$ is defined as the closure
of the pullback of this half-plane to $S$. An attracting petal
depends on the choice of an attracting sector $S$ and on the choice
of the number $M$. The following properties of attracting petals are
immediate:
\begin{enumerate}
\item any attracting petal $P$ is a compact subset of the plane such that
$tP\subset P$ for $t\in [0,1]$;
\item if $P$ is an attracting petal, then the map $f^{\circ q}:P\to\C$ is univalent,
and we have $f^{\circ q}(P)\subset P$;
\item the set $f(P)$ lies in some attracting petal of $f$;
\end{enumerate}

\begin{figure}
  \includegraphics[width=6cm]{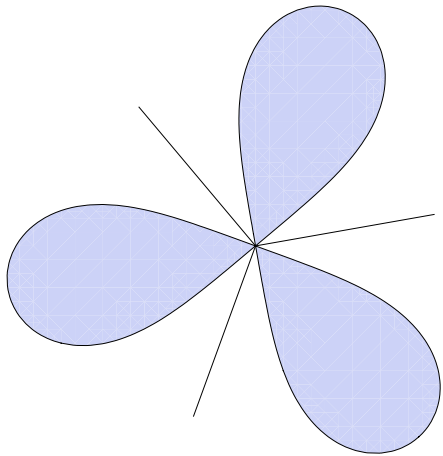}
  \label{pic:pet}
  \caption{Attracting petals and repelling directions for the
  polynomial $f(z)=e^{2\pi i/3}z+z^2+z^3$.}
\end{figure}

In what follows, given $f\in \Fc$ and small $\eps>0$, we define
$g_{f\hspace{-1pt},\,\eps}\in\Fc$ as the cubic polynomial affinely
conjugate to $(1-\eps)f$.

\begin{lem}
 \label{l:pet}
 Let $P$ be an attracting petal of $f$. If $\eps>0$ is sufficiently small,
 then $P$ is contained in $A(g_{f\hspace{-1pt},\,\eps})$.
\end{lem}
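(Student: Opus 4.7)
The plan is to work in the Fatou-type coordinate $w=z^{-m}$ on the attracting sector containing $P$. In this coordinate $P$ becomes the half-plane $H=\{w\colon \Re(w\bar a)<-M\}$ and $f^{\circ q}$ conjugates to a map $F_0(w)=w-ma+o(1)$ as $|w|\to\infty$. Since the coefficients of $g_{f\hspace{-1pt},\,\eps}^{\circ q}$ depend analytically on $\eps$ and reduce to those of $f^{\circ q}$ at $\eps=0$, a direct expansion of $g_{f\hspace{-1pt},\,\eps}^{\circ q}(z)^{-m}$ yields
\[
F_\eps(w)=\mu_\eps^{-m}w-ma+\rho_\eps(w),\qquad \mu_\eps=(1-\eps)^q,
\]
with the uniform bound $|\rho_\eps(w)|\le C\bigl(\eps|w|^{(m-1)/m}+|w|^{-1/m}\bigr)$ on $H$. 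The goal is to show $|F_\eps^n(w)|\to\infty$ for every $w\in H$, which in the $z$-variable means $g_{f\hspace{-1pt},\,\eps}^{\circ qn}(z)\to 0$ for every $z\in P$ and hence $P\subset A(g_{f\hspace{-1pt},\,\eps})$.

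The core estimate is
\[
\Re\bigl(F_\eps(w)\bar a\bigr)\le \mu_\eps^{-m}\Re(w\bar a)-m|a|^2+|a|\,|\rho_\eps(w)|,
\]
in which $\mu_\eps^{-m}\Re(w\bar a)\le\Re(w\bar a)$ because $\mu_\eps^{-m}>1$ and $\Re(w\bar a)<0$ on $H$, so the only question is whether the $-m|a|^2$ slack absorbs the error. I would split $H$ into two overlapping regimes: a \emph{bounded} regime $|w|\le R_*(\eps)\sim \eps^{-m/(m-1)}$, on which $|a||\rho_\eps(w)|<m|a|^2/2$ and consequently $\Re(w\bar a)$ drops by at least $m|a|^2/2$ per iteration; and a \emph{far} regime $|w|\ge R_{**}(\eps)\sim \eps^{-1}$, on which the leading term $\mu_\eps^{-m}w$ dominates, giving $|F_\eps(w)|\ge(1+mq\eps/2)|w|$ and hence geometric expansion. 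For $\eps$ small one has $R_{**}(\eps)<R_*(\eps)$, so the regimes overlap. Any forward orbit of $F_\eps$ starting in the bounded regime satisfies $|w_n|\ge |\Re(w_n\bar a)|/|a|\to\infty$, so after finitely many iterations $|w_n|\ge R_{**}(\eps)$ and the orbit enters the far regime, where $|w_n|\to\infty$.

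The main obstacle is the growth of $|\rho_\eps(w)|$ with $|w|$, coming from the perturbed low-degree coefficients of $g_{f\hspace{-1pt},\,\eps}^{\circ q}$; this prevents $H$ from being forward-invariant under $F_\eps$ in the naive sense, specifically near the ``tips'' of $P$ where $|z|$ is very small and $|w|$ is very large. The two-regime argument sidesteps this obstruction because the desired conclusion $z\in A(g_{f\hspace{-1pt},\,\eps})$ requires only $|F_\eps^n(w)|\to\infty$, not $F_\eps^n(w)\in H$ for all $n$. Translating the escape to infinity back via $z=w^{-1/m}$ gives $g_{f\hspace{-1pt},\,\eps}^{\circ qn}(z)\to 0$ for every $z\in P$, which yields $P\subset A(g_{f\hspace{-1pt},\,\eps})$.
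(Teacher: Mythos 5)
Your approach is genuinely different from the paper's, and it contains a gap. The paper proves the lemma softly, with no estimates at all: by property (1) of petals ($tP\subset P$ for $t\in[0,1]$), the map $(1-\delta)f$ sends each petal $\widetilde P_i$ into $(1-\delta)\widetilde P_{i+p}\subset\widetilde P_{i+p}$, so the union of petals is forward-invariant under $(1-\delta)f$ and hence lies in $A((1-\delta)f)$; then the explicit conjugacy $z\mapsto(1-\eps)^{1/2}z$ between $(1-\eps)f$ and $g_{f,\eps}$, together with the fact that $(1-\eps)^{-1/2}P$ sits inside a slightly larger petal, finishes the proof. Your route instead re-derives the escape to $0$ by hand in the Fatou coordinate $w=z^{-m}$, which is much heavier and runs into exactly the branch bookkeeping that the paper's topological argument avoids.

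The gap is in the last step of your two-regime scheme. You claim the conclusion ``requires only $|F_\eps^{\circ n}(w)|\to\infty$, not $F_\eps^{\circ n}(w)\in H$ for all $n$.'' That is not correct. The map $F_\eps$ is defined by $F_\eps(w)=\bigl(g_{f,\eps}^{\circ q}(w^{-1/m})\bigr)^{-m}$ with a fixed branch of $w\mapsto w^{-1/m}$ taking values in the attracting sector $S$. The identity $F_\eps^{\circ n}(w)=\bigl(g_{f,\eps}^{\circ qn}(z)\bigr)^{-m}$ holds \emph{only} as long as the $z$-orbit $g_{f,\eps}^{\circ qj}(z)$ stays in $S$ for $j<n$; if some $g_{f,\eps}^{\circ qj}(z)$ leaves $S$, the chosen branch of $(\cdot)^{-1/m}$ returns a point differing from the true iterate by a nontrivial $m$-th root of unity, and from then on $F_\eps^{\circ n}(w)$ no longer tracks the dynamics of $g_{f,\eps}$. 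In your bounded regime this is automatically fine because you show the $w$-orbit stays in $H$, hence the $z$-orbit stays in the petal, hence in $S$. But in the far regime you control only $|w_n|$ and explicitly relinquish control of $\arg w_n$; without an argument that the $z$-orbit remains in $S$ (e.g., a bound on $\sum_n|z_n|^q$ using the geometric growth of $|w_n|$, combined with the angular margin left over from having entered via $H$), the conclusion $g_{f,\eps}^{\circ qn}(z)\to 0$ does not follow from $|F_\eps^{\circ n}(w)|\to\infty$. This can probably be repaired, but as written it is a real hole, and it is precisely the sort of issue that the paper's star-shape argument sidesteps entirely.
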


\begin{proof}
Set $g=g_{f\hspace{-1pt},\,\eps}$. Let us show that \emph{every}
attracting petal $\widetilde P$ of $f$ is contained in
$A((1-\delta)f)$ for every $0<\delta<1$. Assume that there are
attracting petals $\widetilde P_0=\widetilde P$, $\widetilde P_1$,
$\dots$, $\widetilde P_{q-1}$ with $f(\widetilde P_i)\subset
\widetilde P_{i+p\pmod q}$. It follows from property (1) of
attracting petals that the map $(1-\delta)f$ takes $\widetilde P_i$
to a subset of $\widetilde P_{i+q\pmod q}$. Hence, $\widetilde
P\subset A((1-\delta)f)$.

A conjugacy between $(1-\eps)f$ and $g$ is given by the map
$z\mapsto (1-\eps)^{1/2}z$, and we may choose $\eps$ so small that
the set $(1-\eps)^{-1/2}P$ is contained in some (slightly bigger)
attracting petal $\widetilde P$ of $f$. By the previous paragraph,
$(1-\eps)^{-1/2}P\subset A((1-\eps)f)$, hence $P\subset A(g)$.
\end{proof}

By a \emph{parabolic domain at $0$}, we mean a Fatou component of
$f$ containing some attracting petal.

\begin{cor}
  \label{c:pardom} Suppose that $f\in \Fc_\la$, $|\la|\le 1$, is
  potentially renormalizable, and $\Omega$ is a parabolic domain of
  $f$ at $0$. For every compact set $D\subset\Omega$ and every
  sufficiently small $\eps>0$, we have $D\subset
  A(g_{f\hspace{-1pt},\,\eps})$.
\end{cor}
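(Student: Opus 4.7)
The plan is to combine Lemma~\ref{l:pet}, which places each attracting petal of $f$ inside $A(g_{f\hspace{-1pt},\,\eps})$ for small $\eps$, with the fact that attracting petals absorb every forward orbit in a parabolic domain, and then to transfer this absorption from $f$ to its perturbation $g_{f\hspace{-1pt},\,\eps}$ by continuous dependence of iterates.

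First, I would recall that by the definition of a parabolic domain, $\Omega$ contains an attracting petal of $f$ at $0$. Standard parabolic theory on $\Omega$, cleanest via Fatou coordinates which conjugate $f^{\circ q}|_\Omega$ to unit translation on a right half-plane, implies that for every compact set $D\subset\Omega$ there exist an integer $N\ge 0$ and an attracting petal $P$ of $f$ at $0$ such that $f^{\circ Nq}(D)$ lies in the interior of $P$. Next, the map $(\eps,z)\mapsto g_{f\hspace{-1pt},\,\eps}^{\circ Nq}(z)$ is jointly continuous and reduces to $f^{\circ Nq}(z)$ at $\eps=0$, so by compactness of $D$, for all sufficiently small $\eps>0$ we still have $g_{f\hspace{-1pt},\,\eps}^{\circ Nq}(D)\subset P$. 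Lemma~\ref{l:pet} gives $P\subset A(g_{f\hspace{-1pt},\,\eps})$ once $\eps$ is small enough, and since the basin $A(g_{f\hspace{-1pt},\,\eps})$ is totally invariant under $g_{f\hspace{-1pt},\,\eps}$, we conclude $D\subset A(g_{f\hspace{-1pt},\,\eps})$, as desired.

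The main obstacle is the uniform absorption step: one must choose a single iterate $N$ and a single petal $P$ that work for the entire compact set $D$, not merely for each orbit individually. This is routine once Fatou coordinates on $\Omega$ are at hand, since the $f^{\circ q}$-preimages of a half-plane $\{\Re w>M\}$ exhaust $\Omega$ by an increasing sequence of relatively compact open sets; but it is the one ingredient beyond Lemma~\ref{l:pet} and elementary perturbation theory. The remaining steps — continuity of iteration in the parameter $\eps$ and total invariance of the basin — are essentially formal.
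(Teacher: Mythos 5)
Your overall route is the same as the paper's: invoke Lemma~\ref{l:pet} to place an attracting petal $P$ into $A(g_{f\hspace{-1pt},\,\eps})$, use parabolic dynamics to find a fixed iterate $f^{\circ Nq}$ that carries all of $D$ into $P$, and use continuity of the iterate in $\eps$. But the last step has a genuine gap.

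The set $A(g)$ in this paper is the \emph{immediate} basin of $0$ (the Fatou component containing $0$); this is forced by statements like ``$g|_{A(g)}$ is two-to-one'' and ``the critical point of $g$ contained in $A(g)$''. The immediate basin is forward invariant but \emph{not} totally invariant---only the full basin of attraction is totally invariant. So from $g^{\circ Nq}(D)\subset P\subset A(g)$ you may only conclude that $D$ lies in $g^{-Nq}\bigl(A(g)\bigr)$, which is a union of Fatou components of $g$, of which $A(g)$ is just one. Your appeal to total invariance does not single out that component, and indeed gives only $D\subset$ (full basin of $0$), which is weaker than the Corollary's claim.

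The paper closes this gap with a small but essential device: it first replaces $D$ with a larger Jordan disk (hence connected) inside $\Omega$ that meets $P$. Then $D$ is connected, intersects $P\subset A(g)$, and lies in $g^{-Nq}\bigl(A(g)\bigr)$; since a connected set in $g^{-Nq}\bigl(A(g)\bigr)$ that meets $A(g)$ must lie entirely in $A(g)$, the conclusion follows. You would need to insert this connectedness-plus-intersection argument (or some equivalent) to make your proof complete.
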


\begin{proof}
Let $p/q$ be the rotation number, and let $P\subset \Omega$ be an
attracting petal. We may assume that $D$ is a Jordan disk. Replacing
$D$ with a bigger Jordan disk if necessary, we may assume that $D\cap
P\ne\0$. By compactness of $D$, there exists a positive integer $m$
with the property $f^{\circ qm}(D)\subset P$.

Let $\eps>0$ be a small real number, and set
$g=g_{f\hspace{-1pt},\,\eps}\in\Fc$. By Lemma \ref{l:pet}, we have
$P\subset A(g)$. We have $g^{\circ qm}(D)\subset P$ provided that
$\eps$ is small enough. It follows
that $D$ is contained in some pullback of $A(g)$. Since $D\cap P\ne\0$,
this pullback must coincide with $A(g)$.
\end{proof}

Corollary~\ref{c:1crit} identifies $\om_1(f)$ in the attracting and
parabolic cases.

\begin{cor}\label{c:1crit}
Let $f\in \Fc_{nr}$ be potentially renormalizable. If $0$ is a parabolic
$($resp., attracting$)$ fixed point of $f$, and $c$ is a critical
point of $f$ belonging to a parabolic $($resp., the attracting$)$
domain $\Omega$ of $f$ at $0$, then $c=\om_1(f)$. Thus, such
$\Omega$ is unique.
\end{cor}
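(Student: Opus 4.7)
The plan is to reduce to the definition of the principal critical point by a small perturbation into $\Ac$. By Lemma~\ref{l:2crpts}, the two critical points of $f$ are distinct, so on a sufficiently small neighborhood $\Uc$ of $f$ in $\Fc$ they continue holomorphically as $\omega_1,\omega_2\colon \Uc\to\C$. Write $c=\omega_i(f)$ for one of $i\in\{1,2\}$. The task reduces to showing that this index $i$ is the \emph{principal} label, i.e.\ that $\omega_i$ coincides (on a neighborhood of $f$) with the principal branch provided by Definition~\ref{d:principo} and Theorem~\ref{t:princ}.

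The strategy is to exhibit, arbitrarily close to $f$, a map $g\in \Ac$ for which $\omega_i(g)$ lies in the basin $A(g)$. To do so, I will choose once and for all a closed Jordan disk $D\subset\Omega$ with $c$ in its interior. In the parabolic case, Corollary~\ref{c:pardom} immediately gives $D\subset A(g_{f\hspace{-1pt},\,\eps})$ for all sufficiently small $\eps>0$; setting $g=g_{f\hspace{-1pt},\,\eps}$ and noting that $|g'(0)|=(1-\eps)|\la|<1$, we have $g\in\Ac$. In the attracting case $|\la|<1$, $f$ is already in $\Ac$ and $\Omega\subset A(f)$, so one can simply take $g=f$.

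In either case, by continuity $\omega_i(g)$ lies inside $D$ for $\eps$ small enough, hence in $A(g)$. Since $f$ is potentially renormalizable, $g|_{A(g)}$ is two-to-one for $g$ close enough to $f$ (as recalled in the paragraph introducing $\om_1$), so $A(g)$ contains exactly one critical point of $g$; by Definition~\ref{d:principo} this critical point is $\omega_1(g)$. Therefore $\omega_i(g)=\omega_1(g)$ for all $g$ in a neighborhood of $f$ in $\Uc\cap\Ac$, and by analytic continuation in $\Uc$ the equality $\omega_i\equiv\omega_1$ holds on a neighborhood of $f$; evaluating at $f$ gives $c=\omega_1(f)$.

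For the uniqueness of $\Omega$: if some other such domain $\Omega'\ne\Omega$ contained a critical point $c'$, the same argument applied to $c'$ would give $c'=\omega_1(f)=c$. But $\Omega$ and $\Omega'$ are distinct Fatou components, hence disjoint, which contradicts $c\in\Omega\cap\Omega'$. The main obstacle is bookkeeping around Corollary~\ref{c:pardom}, namely simultaneously trapping $c$ in the basin of the perturbation and keeping the deformed critical point $\omega_i(g)$ inside the same compact disk $D$; both requirements are handled uniformly because $D$ is fixed in advance and $\omega_i(g)$ depends continuously on $g$.
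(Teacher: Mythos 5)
Your proof is correct and follows essentially the same route as the paper: reduce to the parabolic case, choose a small disk $D\subset\Omega$ around $c$, apply Corollary~\ref{c:pardom} to trap $D$ in $A(g_{f,\eps})$, and observe that the perturbed critical point near $c$ lies in $A(g)$, so it must be the principal one. Your write-up is slightly more explicit than the paper's (spelling out the attracting case as immediate since $f\in\Ac$ already, making the identification with the branch $\omega_1$ via continuity/analytic continuation explicit, and supplying the short disjointness argument for uniqueness of $\Omega$), but the underlying argument is the same.
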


\begin{proof}
We may assume that $0$ is parabolic. Let $D$ be a small disk around
$c$ contained in $\Omega$. By Corollary \ref{c:pardom}, if $\eps>0$
is small enough and $g=g_{f\hspace{-1pt},\,\eps}$, then $D\subset
A(g)$. Let $c_g$ be the critical point of $g$ close to $c$. If
$\eps$ is small enough, then $c_g\in D$. Therefore, $c_g\in A(g)$
and $c=\om_1(f)$ by definition of the principal critical point.
\end{proof}

The proof of Corollary~\ref{c:near} is left to the reader; notice that
the claim about the Julia set of a polynomial $g$ being locally
connected follows from \cite{DH}.

\begin{cor}\label{c:near}
Suppose that $f\in \Fc_{nr}$ is potentially renormalizable, $0$ is parabolic and,
for some $k$, the point $f^{\circ k}(\om_2(f))$ belongs to a parabolic
domain at $0$. Then the maps $g=g_{f\hspace{-1pt},\,\eps}$ converge
to $f$ as $\eps\to 0$, have locally connected Julia sets and are
such that $\om_1(g)\in A(g)$, $\om_2(g)\notin A(g)$ and
$g^{\circ k}(\om_2(g))\in A(g)$.
\end{cor}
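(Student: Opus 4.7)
The plan is to unfold the definition of $g_{f,\eps}$ and assemble results already proved in the section. First I would compute that the affine conjugacy $z\mapsto (1-\eps)^{-1/2}z$ turns $(1-\eps)f$ into $g(z)=(1-\eps)\la z+(1-\eps)^{1/2}bz^2+z^3$, which makes the convergence $g\to f$ as $\eps\to 0$ immediate and gives $g'(0)=(1-\eps)\la$. Since $|g'(0)|=1-\eps<1$, we have $g\in\Ac$ with $0$ an attracting fixed point; and for $\eps$ small enough $g$ lies in the neighborhood $\Uc$ of $f$ on which $\om_1$ is holomorphic. Hence Definition~\ref{d:principo} delivers $\om_1(g)\in A(g)$ automatically.

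Next I would argue $\om_2(g)\notin A(g)$. The remark preceding Lemma~\ref{l:2crpts}, that $g|_{A(g)}$ is two-to-one whenever $g\in\Uc\cap\Ac$ is close to the potentially renormalizable $f$, forces $A(g)$ to contain exactly one critical point of $g$; by the previous step that critical point is $\om_1(g)$, so the other critical point $\om_2(g)$ cannot lie in $A(g)$.

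To obtain $g^{\circ k}(\om_2(g))\in A(g)$ I would invoke the hypothesis together with Corollary~\ref{c:pardom}: let $\Omega$ be a parabolic domain of $f$ at $0$ containing $f^{\circ k}(\om_2(f))$, and choose a compact Jordan disk $D\subset\Omega$ with $f^{\circ k}(\om_2(f))$ in its interior. By Corollary~\ref{c:pardom}, $D\subset A(g)$ for every sufficiently small $\eps$, while continuity of critical points and of finite iterates in $\eps$ yields $g^{\circ k}(\om_2(g))\to f^{\circ k}(\om_2(f))$. So $g^{\circ k}(\om_2(g))\in D\subset A(g)$ once $\eps$ is small enough.

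Combining the last two steps, both critical points of $g$ eventually land in the immediate attracting basin $A(g)$ of the attracting fixed point $0$; in particular $g$ is hyperbolic and $K(g)$ is connected, so local connectivity of $J(g)$ is the classical Douady--Hubbard statement for hyperbolic polynomials with connected Julia set \cite{DH}. The only step that is not pure bookkeeping is the inclusion $D\subset A(g)$ for small $\eps$, which is precisely what Corollary~\ref{c:pardom} supplies; accordingly I do not anticipate a serious obstacle, only a careful assembly of the tools already developed in this section.
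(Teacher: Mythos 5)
Your proof is correct and follows exactly the route the paper intends: the paper leaves the argument to the reader, pointing only to \cite{DH} for local connectivity, and your assembly of Definition~\ref{d:principo}, the two-to-one observation for perturbations of a potentially renormalizable map, Corollary~\ref{c:pardom}, and the Douady--Hubbard local connectivity result for hyperbolic polynomials with connected Julia set is the natural (and evidently intended) proof.
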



\section{Bounded components of $\Fc_\la\sm \Pc_\la$ must be
of Siegel capture type or of queer type}\label{s:exteclosu}

We need the notion of an active critical point introduced by
McMullen in \cite{Mc00}. Set $i=1$ or 2, and take $f\in\Fc_\lambda$.
The critical point $\om_i(f)$ is \emph{active} if, for every
neighborhood $\Uc$ of $f$ in $\Fc_\lambda$, the sequence of the
mappings $g\mapsto g^{\circ n}(\om_i(g))$ fails to be normal in
$\Uc$. If the critical point $\om_i(f)$ is not active, then it is
said to be \emph{passive}. 

\begin{cor}[\cite{bopt14b}]
\label{c:bdd-stab}
  Let $\lambda$ be a non-repelling multiplier.
  Every bounded component $\Wc$ of $\Fc_\la\sm \Pc_\la$
  consists of stable maps.
\end{cor}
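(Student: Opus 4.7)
I will invoke the Mañé–Sad–Sullivan characterization of $J$-stability \cite{MSS,lyu83}: $g\in\Fc_\la$ is stable in $\Fc_\la$ iff each of the marked critical points $\omega_1(g)$, $\omega_2(g)$ is passive, i.e., the iterate family $\{g\mapsto g^{\circ n}(\omega_i(g))\}_{n\ge 0}$ is normal in a neighborhood of $g$. I verify passivity of $\omega_1$ and of $\omega_2$ separately throughout $\Wc$ and conclude by MSS.

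\emph{Passivity of $\omega_1$.} By Lemma~\ref{l:2crpts} and Theorem~\ref{t:princ}, the principal critical point extends uniquely to a single-valued holomorphic function $\omega_1\colon\Wc\to\C$ on the connected open set $\Wc$, and $\omega_1(g)\in K(g)$ for every $g\in\Wc$. Since $\Wc$ is bounded in $\Fc_\la$ and $g\mapsto K(g)$ is upper semicontinuous in the Hausdorff metric, $\bigcup_{g\in K}K(g)$ is bounded for every compact $K\subset\Wc$. Hence the iterate family for $\omega_1$ is locally uniformly bounded on $\Wc$ and therefore normal by Montel's theorem, so $\omega_1$ is passive.

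\emph{Passivity of $\omega_2$.} This is the main step. The cleanest strategy is to argue by contradiction. Suppose $\omega_2$ is active at some $g_0\in\Wc$. By McMullen's activity principle \cite{Mc00}, arbitrarily close to $g_0$ there are parameters $g\in\Fc_\la$ at which $\omega_2(g)$ exhibits special behavior (preperiodic to a repelling cycle, lying in a newly created super-attracting cycle, or captured by a pre-existing Fatou component). Combining such perturbations with Corollary~\ref{c:1crit} and the parabolic/attracting perturbation lemmas of Section~\ref{s:parab} (Lemma~\ref{l:pet}, Corollaries~\ref{c:pardom} and \ref{c:near}), the plan is to promote such a perturbation to a nearby $g'\in\Fc_\la$ for which both critical points lie in the immediate Fatou component of $0$; then $[g']\in\phd_3$, i.e., $g'\in\Pc_\la$. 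Since $\Pc_\la$ is closed in $\Fc_\la$ and $\Wc\subset\Fc_\la\sm\Pc_\la$ is open, this contradicts $g_0\in\Wc$.

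The hard step is precisely this promotion. McMullen's principle only guarantees \emph{some} activity-type perturbation, not capture specifically by the immediate basin of $0$, and the latter is what is required to land in $\phd_3$. Converting the former into the latter uses that $\bd\Wc\subset\Pc_\la$ constrains the boundary of $\Wc$ to carry $\ol\phd_3$-type dynamics (Theorem~\ref{t:prophd}); propagating this constraint into $\Wc$ via a suitable quasiconformal adjustment, together with the Section~\ref{s:parab} machinery to pass through the attracting, parabolic, and Siegel regimes at $0$ uniformly, is the technical core of the proof given in \cite{bopt14b}.
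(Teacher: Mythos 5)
Your plan correctly identifies that, by Mañé--Sad--Sullivan, it suffices to show both critical points are passive, and your treatment of $\omega_1$ is essentially fine (if more elaborate than necessary). But your treatment of $\omega_2$ is a genuine gap, and you acknowledge it yourself: the ``promotion'' from an arbitrary activity-type perturbation to capture by the immediate basin of $0$ is not supplied, and there is no obvious way to supply it. The whole contradiction-via-McMullen strategy is the wrong tool here.

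You missed the one-line observation that makes the corollary easy and handles \emph{both} critical points uniformly: the \emph{maximum principle}. Since $\Wc$ is a bounded component of $\Fc_\la\sm\Pc_\la$, its boundary $\bd(\Wc)$ lies in the compact set $\Pc_\la$. For $g\in\Pc_\la$ the Julia set $J(g)$ is connected, so both critical orbits stay in $K(g)$, and these are uniformly bounded over the compact set $\bd(\Wc)$. Each map $g\mapsto g^{\circ n}(\omega_i(g))$ is holomorphic on a neighborhood of $\ol\Wc$ (working with symmetric functions of the two critical points, or locally with branches), so the maximum principle carries the uniform bound on $\bd(\Wc)$ into all of $\Wc$, simultaneously for all $n$. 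That gives local uniform boundedness of the iterate families on $\Wc$, hence normality by Montel, hence passivity of both $\omega_1$ and $\omega_2$, hence stability. No activity-principle analysis, no promotion step, and no case distinction between the attracting/parabolic/Siegel regimes at $0$ is needed. You should replace the entire $\omega_2$ discussion (and simplify the $\omega_1$ discussion) with this maximum-principle argument; as written, your proof does not establish the statement.
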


\begin{proof}
  By \cite{MSS}, to prove that $f\in\Wc$ is stable, it suffices to show that
  both critical points of $f$ are passive.
  Note that, if $g\in\bd(\Wc)$, then the $g$-orbits of $\om_1(g)$ and
  of $\om_2(g)$ are bounded uniformly with respect to $g$.
  By the maximum principle, the $f$-orbits of $\om_1(f)$ and $\om_2(f)$
  are uniformly bounded for all $f\in\Wc$, which implies normality.
  Thus both critical points are passive, and the corollary is proved.
\end{proof}

We want to improve the description of the dynamics of maps in a
bounded 
component of $\Fc_\la\sm \Pc_\la$ given in \cite{bopt14}
(see Theorem~\ref{t:extendclo}). Let $f\in\Fc_\lambda, |\lambda|\le
1$, belong to a bounded 
component
$\Wc_f$ of $\Fc_\la\sm \Pc_\la$. By Corollary~\ref{c:bdd-stab}, the map $f$ is stable. A
priori, $\Wc_f$ (and $f$) can be classified into five types:
\begin{description}
  \item[Disjoint type] the critical point
$\omega_2(f)$ lies in a periodic attracting basin but not in $A(f)$.
\item[Attracting capture type] we have
$|\lambda|<1$, and $\omega_2(f)$ is \emph{eventually} mapped to
$A(f)$.
\item[Parabolic capture type]
$\lambda$ is a root of unity, and $\omega_2(f)$ is eventually mapped
to a periodic parabolic Fatou domain at $0$.
\item[Siegel capture type]
the critical point $\omega_2(f)$ is eventually mapped to the Siegel
disk around $0$.
\item[Queer type]
we have $\omega_2(f)\in J(f)$.
\end{description}

In this section, we prove that 
bounded components of $\Fc_\la\sm \Pc_\la$
of the first three types do not exist. We use \cite{bfmot12}, where
fixed and periodic points of various maps of plane continua were
studied.

\begin{dfn}\label{d:weakrep}
A \emph{dendrite} is a locally connected continuum containing no
Jordan curves. If $g:D\to D$ is a self-mapping of a dendrite $D$, a
periodic point $a$ is said to be \emph{weakly repelling} is there
exists an arc $I\subset D$ with endpoint $a$ and a number $k$ such
that $g^{\circ k}(a)=a$ and, for any $x\in I\sm \{a\}$, the point
$x$ separates the points $a$ and $g^{\circ k}(x)$ in $D$.
\end{dfn}


\begin{thm}[Theorem 7.2.6 \cite{bfmot12}]\label{t:726}
Suppose that $g:D\to D$ is a self-mapping of a dendrite $D$ such
that all its periodic points are weakly repelling. Then $g$ has
infinitely many periodic cutpoints. \end{thm}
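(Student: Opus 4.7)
The plan is to combine a classical fixed-point theorem for dendrites with the weak-repelling hypothesis to produce an ever-growing family of periodic cutpoints, either by direct induction or by contradiction. First I would invoke the classical fact that every continuous self-map of a dendrite has a fixed point, obtaining a fixed point $a_0$ of $g$. By hypothesis $a_0$ is weakly repelling, so there is an arc $I_0 \subset D$ with endpoint $a_0$ and an integer $k_0$ with $g^{\circ k_0}(a_0)=a_0$ such that every $x \in I_0 \setminus \{a_0\}$ separates $a_0$ from $g^{\circ k_0}(x)$ in $D$. Every interior point of $I_0$ is a cutpoint of $D$, so if $a_0$ itself is a cutpoint we already have one periodic cutpoint; otherwise $a_0$ is an endpoint but still furnishes a whole arc of cutpoints pointing into the interior of $D$.

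Next I would bootstrap a distinct periodic cutpoint. Let $b_0$ be the other endpoint of $I_0$; by the weak-repelling condition, $b_0$ separates $a_0$ from $g^{\circ k_0}(b_0)$, i.e., the unique arc in $D$ from $a_0$ to $g^{\circ k_0}(b_0)$ strictly contains $I_0$. Consequently, $g^{\circ k_0}$ maps $I_0$ onto an arc that covers $I_0$ and extends beyond $b_0$, so a topological-horseshoe fixed-point argument applied to $g^{\circ Nk_0}$ on a suitable subdendrite produces a periodic point $a_1 \neq a_0$ lying strictly between the endpoints of a covered arc. Such $a_1$ is a cutpoint of $D$.

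For infinitely many periodic cutpoints I would argue by contradiction: suppose only finitely many exist; then, together with their forward orbits, they form a finite $g$-invariant set $F$. The complement $D \setminus F$ decomposes into finitely many open subdendrites permuted by some iterate $g^{\circ m}$. Applying the dendrite fixed-point theorem to a component fixed by $g^{\circ m}$, one obtains a fixed point of $g^{\circ m}$ in its closure. If this fixed point lies in $F$, it must be a weakly repelling periodic point of $g$, and the arc provided by weak repulsion together with the horseshoe-type construction above yields a further periodic cutpoint outside $F$; if it lies outside $F$, it is already the desired new periodic cutpoint. Either way, we reach a contradiction.

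The main obstacle will be controlling whether the horseshoe-type fixed points produced are cutpoints rather than endpoints, and are genuinely new. The weak-repelling hypothesis must be used quantitatively: the covering $g^{\circ k_0}(I_0) \supset I_0$ yields a topological return map on $I_0$, and one needs to ensure that the resulting new fixed points of $g^{\circ Nk_0}$ lie in the open interior of an arc rather than at endpoints of $D$, and that they escape the finite candidate set $F$ at each iteration. This is where carefully analyzing the components of $D \setminus F$ and the boundary behavior forced by weak repulsion at the periodic points of $F$ becomes the essential technical step.
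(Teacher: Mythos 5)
The paper does not prove this statement; it is quoted verbatim as Theorem~7.2.6 of the Memoir \cite{bfmot12}, so there is no internal proof to compare against. Your attempt must therefore be judged on its own merits, and as written it has several genuine gaps.

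The central problem is the horseshoe step. You claim that $g^{\circ k_0}$ maps $I_0$ ``onto an arc that covers $I_0$,'' and that a covering/horseshoe argument then produces a periodic point $a_1\neq a_0$ in $I_0$. Neither half survives scrutiny. First, the continuous image of an arc in a dendrite is a subdendrite, not in general an arc, so the interval-map machinery (intermediate value theorem, itineraries, Sharkovskii-style coverings) does not transfer directly. What you do get for free is that $g^{\circ k_0}(I_0)$ contains the unique arc from $a_0$ to $g^{\circ k_0}(b_0)$, which contains $I_0$; but this containment is weaker than a covering relation between arcs. Second, and more decisively, the weak-repelling condition \emph{rules out} what you want: if $x\in I_0\setminus\{a_0\}$ satisfied $g^{\circ k_0}(x)=x$, then $x$ would have to separate $a_0$ from $g^{\circ k_0}(x)=x$, which is impossible. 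So the covering $I_0\subset g^{\circ k_0}(I_0)$ can never yield a fixed point of $g^{\circ k_0}$ in $I_0\setminus\{a_0\}$. Your retreat to ``$g^{\circ Nk_0}$ on a suitable subdendrite'' is not an actual construction, and it is exactly where the difficulty hides: nothing in the weak-repelling hypothesis at $a_0$ controls $g^{\circ k_0}$ on points that are pushed off $I_0$, so one cannot iterate the covering.

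The contradiction step has further gaps. If $F$ is a finite $g$-invariant set, $D\setminus F$ need \emph{not} have finitely many components: a periodic point can be a branch point of infinite order in a dendrite, in which case removing it disconnects $D$ into infinitely many pieces. Moreover, since $g$ is typically not a homeomorphism, components of $D\setminus F$ are not permuted: $g(C)$ is connected, but it can straddle several components of $D\setminus F$ by passing through points of $F$. So there is no dynamical permutation on a finite collection to which the dendrite fixed-point theorem can be applied, and even if a fixed point of some iterate were extracted, you still owe an argument that it is a \emph{cutpoint} of $D$ rather than an endpoint. Each of these is a substantive missing idea rather than a routine verification; the cited Memoir devotes serious machinery to this result, and the naive arc-covering and finite-decomposition strategy you outline does not reproduce it.
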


Lemma~\ref{l:repexist} has a topological nature.

\begin{lem}\label{l:repexist}
If a polynomial $F$ has locally connected Julia set with pairwise
disjoint closures of its Fatou domains, then $J(F)$ contains
infinitely many periodic repelling cutpoints unless $J(F)$ is a
Jordan curve.
\end{lem}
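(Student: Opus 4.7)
The plan is to reduce Lemma~\ref{l:repexist} to Theorem~\ref{t:726} by collapsing the bounded Fatou components of $F$ to points and applying the theorem to the induced self-map on the quotient. Assume $J(F)$ is not a Jordan curve. Local connectedness of $J(F)$ forces the boundary of every bounded Fatou component $U$ of $F$ to be a Jordan curve, and the hypothesis makes the closed Jordan disks $\ol U$ pairwise disjoint. Collapsing each $\ol U$ to a point defines an upper semicontinuous decomposition of $K(F)$; let $D$ be the quotient and $\pi\colon K(F)\to D$ the projection. Because $K(F)$ is a full, locally connected continuum and every non-degenerate decomposition element is a closed disk, $D$ is a one-dimensional locally connected continuum containing no Jordan curve, hence a dendrite. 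Since $F$ sends Fatou components to Fatou components, it descends to a continuous self-map $\wt F\colon D\to D$.

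The next step is to verify that every periodic point of $\wt F$ is weakly repelling. Periodic points of $\wt F$ come in two types: (a) periodic points of $F$ in $J(F)\sm\bigcup_U\bd(U)$, and (b) the points $[U]$ for periodic bounded Fatou components $U$. A point of type (a) cannot be Cremer, since $J(F)$ is locally connected, and cannot be parabolic, since a parabolic periodic point lies on the boundary of an adjacent parabolic Fatou domain; hence it is repelling, and weak repulsion in $D$ follows from the local linearization of $F$. For type (b) with $U$ attracting, super-attracting, or parabolic, the boundary $\bd(U)$ contains a repelling periodic point $p$ of $F^{\circ k}$ (at which an external ray lands), and projecting a short arc in $J(F)$ emanating from $p$ along an unstable direction of $F^{\circ k}$ yields an arc in $D$ at $[U]$ that witnesses weak repulsion.

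Once the hypothesis is verified, Theorem~\ref{t:726} yields infinitely many periodic cutpoints of $\wt F$ in $D$. By the no-wandering-domains theorem and Shishikura's bound on the number of non-repelling cycles, only finitely many bounded Fatou components are periodic, so only finitely many periodic cutpoints of $\wt F$ are of the form $[U]$; the remaining infinitely many periodic cutpoints $p$ lie in $J(F)\sm\bigcup_U\bd(U)$. For such $p$, every $\ol U$ is contained in a single component of $K(F)\sm\{p\}$, so $\pi$ induces a bijection between components of $K(F)\sm\{p\}$ and components of $D\sm\{p\}$; consequently $p$ is a cutpoint of $J(F)$. Finally, since $F$ admits only finitely many non-repelling periodic cycles, all but finitely many of these cutpoints are repelling. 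The principal obstacle is the weak repulsion verification at a quotient point $[U]$ corresponding to a Siegel disk, where $F^{\circ k}|_{\bd(U)}$ is an irrational rotation with no periodic points on $\bd(U)$ and no expansion; constructing a witnessing arc in this case requires a finer analysis of the external rays landing on $\bd(U)$ and the $\sigma_d$-dynamics on their angle set.
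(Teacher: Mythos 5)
Your proposal follows the paper's approach: collapse bounded Fatou components of $K(F)$ to get a dendrite $D$ and an induced map, then invoke Theorem~\ref{t:726}. The concluding argument (passing from periodic cutpoints of the induced map to periodic repelling cutpoints of $J(F)$) is fine, and if anything more carefully spelled out than the paper's ``by construction.''

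The genuine gap is exactly the one you flag at the end: weak repulsion at $[U]$ when $U$ is a Siegel disk, where $\bd(U)$ carries no periodic points and there is no expansion to exploit. You cannot simply wave at external rays; the set of angles whose rays accumulate on $\bd(U)$ need not a priori contain any periodic angle, and in general the structure of rays landing on a Siegel boundary is delicate. Your type-(b) argument (find a repelling periodic point $p\in\bd(U)$ and project an unstable arc) is therefore incomplete precisely in the case the paper is at pains to cover uniformly. The paper's device is different and does not split by the type of $U$. It uses the hypothesis that closures of Fatou domains are pairwise disjoint to rule out critical points on $\bd(U)$, then picks a component $K$ of $J(F)\sm\ol U$ and looks at the single point $y$ where $\ol K$ touches $\ol U$. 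If $y$ were not (pre)periodic, then for large $n$ every component of $J(F)\sm\{F^{\circ n}(y)\}$ not containing $U$ would be critical-point-free, hence wandering, yet such a component must contain pullbacks of $U$ --- a contradiction. Thus $y$ is (pre)periodic, some iterate $x\in\bd(U)$ is a repelling periodic cutpoint, and the invariance of $U$ forces zero combinatorial rotation number at $x$, giving the arc witnessing weak repulsion of $\psi(\ol U)$. This single argument covers attracting, parabolic, \emph{and} Siegel domains at once, so the case split you set up is unnecessary. To complete your proof you should replace the type-(b) discussion with an argument of this kind (or any other argument that produces a repelling periodic point on $\bd(U)$ without assuming anything about the internal dynamics of $U$), because as written the Siegel case remains open.
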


\begin{proof}
Assume that $J(F)$ is not a Jordan curve. Define a map $\psi$ which
collapses Fatou domains of $F$ to points and creates a dendrite $D$
out of $J(F)$ with the map $h:D\to D$ induced by $F$; thus,
$\psi\circ F=h\circ \psi$ and $F$ is semiconjugate to $h$ (if $J(F)$
is already a dendrite, then $\psi$ can be taken as the identity map). Then all
periodic points of $h$ are weakly repelling. Indeed, the
corresponding property is immediate if an $h$-periodic point is the
$\psi$-image of an $F$-periodic point. Otherwise let $U$ be a
periodic Fatou domain of $F$. Observe that, by the assumption, $U$
cannot have a critical point, say, $c$ in $\bd(U)$ since if it does,
then $F$ will have two Fatou domains, whose closures contain $c$, a
contradiction.

We claim that there exists a periodic cutpoint $x$ of $J(F)$ with
$x\in \bd(U)$. Indeed, we may assume that $U$ is $F$-invariant. Take
a component $K$ of $J(F)\sm \ol{U}$ (it exists since $J(F)\ne
\bd(U)$). Then $\ol{K}\cap \ol{U}=\{y\}$ for some point $y$. If $y$
is not (pre)periodic, then there exists $N$ such that for any $n\ge
N$ any component of $J(F)\sm \{F^{\circ n}(y)\}$ except for one component
containing $U$ contains no critical points. This implies that any
such component $T$ is wandering. However, $T$ must contain
pullbacks of $U$, a contradiction. Hence $y$ is
(pre)periodic, and some image $x$ of $y$ is a desired periodic
cutpoint of $J(F)$ of period, say, $m$. Since $U$ is invariant, the
combinatorial rotation number of $F^{\circ m}$ at $x$ is zero. This and the
fact that $x$ is repelling in $J(F)$ implies that the point
$\psi(\ol{U})$ is a weakly repelling periodic point of $h$.

Thus, any periodic point of $h$ is weakly repelling. By
Theorem~\ref{t:726}, the map $h$ has infinitely many periodic cutpoints in
$\psi(J(F))$. By construction, this implies that $F$ has infinitely
many periodic repelling cutpoints (recall that $F$ has only finitely
many periodic Fatou domains).
\end{proof}

Corollary~\ref{c:infperio} now easily follows.

\begin{cor}\label{c:infperio}
Let $g\in \Ac$ be such that $g|_{A(g)}$ is two-to-one and
$g^{\circ k}(\om_2(g))\in A(g)$. Then $g$ has locally connected Julia set
with pairwise disjoint closures of its Fatou domains. Thus, $J(g)$
contains infinitely many periodic repelling cutpoints.
\end{cor}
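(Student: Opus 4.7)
The plan is to establish three things in sequence: (1) hyperbolicity with local connectedness of $J(g)$, (2) pairwise disjoint closures of Fatou components of $g$, and (3) the conclusion via Lemma~\ref{l:repexist}.

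For (1), by hypothesis both critical orbits of $g$ are eventually absorbed into $A(g)$ and hence converge to the attracting fixed point $0$, so $g$ is hyperbolic with connected $K(g)$. The classical theorem of Fatou--Douady--Hubbard then gives that $J(g)$ is locally connected and every bounded Fatou component of $g$ is a Jordan domain.

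For (2), I would first show $\overline{A(g)}$ is disjoint from the closure of every other Fatou component. Since $g|_{A(g)}$ has degree~$2$ and $0$ is attracting, a quadratic-like restriction $g\colon U\to V$ can be extracted on a disk neighborhood $U\supset\overline{A(g)}$ with $\overline{U}\subset V$; Douady--Hubbard straightening places its hybrid class in the main cardioid of $\Mc_2$, so the filled Julia set of $g|_U$ equals $\overline{A(g)}$. If some Fatou component $W\ne A(g)$ had $p\in\overline{A(g)}\cap\overline{W}$, fix $m_0$ minimal with $g^{\circ m_0}(W)=A(g)$; by continuity of $g^{\circ m}$ at $p$ and the invariance $g^{\circ m}(p)\in\partial A(g)\subset V$ for all $m$, points $z\in W$ sufficiently close to $p$ satisfy $g^{\circ m}(z)\in V$ for $m\le m_0$, while for $m\ge m_0$ they automatically have $g^{\circ m}(z)\in A(g)\subset V$, so the full forward orbit of $z$ lies in $V$; hence $z$ belongs to the filled Julia set of $g|_U$, i.e., $z\in\overline{A(g)}$, contradicting $z\in W$ (as $W\cap\overline{A(g)}=\emptyset$). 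I would then propagate to arbitrary pairs by induction on the depth $n$ with $g^{\circ n}(\cdot)=A(g)$: given $U'\ne V'$ with $q\in\overline{U'}\cap\overline{V'}$ and $g(U')=g(V')=Z$, note $q\in J(g)$ is not a critical point (critical points lie inside Fatou components), so $g$ is a local homeomorphism at $q$; a small ball $B$ around $q$ then decomposes $g(B)\cap Z$ as the disjoint union of the nonempty open sets $g(B\cap U')$ and $g(B\cap V')$, contradicting connectedness of $g(B)\cap Z$. If instead $g(U')\ne g(V')$, the inductive hypothesis applied to $g(q)$ yields the contradiction.

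For (3), $J(g)$ is not a Jordan curve: otherwise $K(g)=\overline{A(g)}$ would contain $\om_2(g)$, but $\om_2(g)$ lies in the Fatou set (its orbit enters $A(g)$) and not in $A(g)$ itself (which contains only $\om_1(g)$, since $g|_{A(g)}$ is $2$-to-$1$). Hence Lemma~\ref{l:repexist} applies and yields infinitely many periodic repelling cutpoints in $J(g)$. The main obstacle is step (2): setting up the quadratic-like restriction $g\colon U\to V$ carefully and executing the continuity-and-orbit-trapping argument to eliminate pinch points on $\overline{A(g)}$; once this base case is in hand, the induction to all pairs of Fatou components via the local-homeomorphism argument is a relatively clean combinatorial follow-up.
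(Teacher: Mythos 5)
Your proposal is correct in outline and lands where the paper does --- local connectivity from \cite{DH}, disjointness of closures of Fatou components via the observation that a shared boundary point would have to be a critical point (impossible, since both critical points of $g$ lie in the interior of Fatou components), then Lemma~\ref{l:repexist}. Two remarks.

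First, there is a small but real slip in your base case. You trap the forward orbit of $z$ in $V$ and then conclude $z\in K(g|_U)$. But membership in the filled Julia set of the quadratic-like restriction $g\colon U\to V$ requires the orbit to stay in $U$, not merely in $V$: a point of $V\sm U$ has no guarantee of remaining in $V$ on the next iterate. The fix is cheap. Since $g^{\circ m}(p)\in\partial A(g)\subset U$ for all $m$ and $U$ is open, continuity forces $g^{\circ m}(z)\in U$ for $0\le m\le m_0$ once $z$ is close enough to $p$; together with $g^{\circ m}(z)\in A(g)\subset U$ for $m\ge m_0$, the orbit of $z$ really does stay in $U$, and the contradiction goes through.

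Second, the straightening detour is valid but unnecessary: your own inductive step already contains the whole argument. The paper simply takes $U\ne V$ with $\ol U\cap\ol V\ne\0$, notes both are eventually mapped onto $A(g)$, and picks the unique $m$ with $g^{\circ m}(U)\ne g^{\circ m}(V)$ but $g^{\circ m+1}(U)=g^{\circ m+1}(V)$ (such $m$ exists because equality, once attained, persists forward, and it fails at $m=0$). The common point of $g^{\circ m}(\ol U)$ and $g^{\circ m}(\ol V)$ must then be critical, by exactly the local-homeomorphism argument you give --- and this covers the case where one of the two touching components is $A(g)$ just as well, so no separate base case involving hybrid classes is needed. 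One thing you do that the paper leaves tacit and that is worth keeping: you explicitly verify that $J(g)$ is not a Jordan curve before invoking Lemma~\ref{l:repexist}, by noting $\om_2(g)$ lies in a bounded Fatou component distinct from $A(g)$.
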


\begin{proof}
By \cite{DH}, the set $J(g)$ is locally connected. Suppose that two
Fatou domains $U$, $V$ are such that $\ol U\cap \ol V\ne \0$. Since
$A(g)$ is the only periodic Fatou domain of $g$, both $U$ and $V$
must eventually map to $A(g)$. Hence there exists a unique number
$m$ such that $g^{\circ m}(U)\ne g^{\circ m}(V)$ while
$g^{\circ m+1}(U)=g^{\circ m+1}(V)$. This
implies that the common point of $g^{\circ m}(\ol U)$ and $g^{\circ m}(\ol V)$ is
critical, a contradiction. Now Lemma~\ref{l:repexist} implies the
remaining claims of this lemma.
\end{proof}

To prove that 
bounded components of $\Fc_\la\sm \Pc_\la$ cannot be of
disjoint type or attracting (parabolic) capture types, we use
perturbations as a tool. Thus, we need a few general facts about
perturbations. Lemma~\ref{l:rep} goes back to Douady and Hubbard
\cite{DH}.
Recall that a \emph{smooth $($external\,$)$ ray} of a polynomial $P$, for which $J(P)$
is not necessarily connected, is defined as a homeomorphic image of $\R$
in $\C\sm K(P)$ accumulating only in $\{\infty\}\cup J(P)$, avoiding pre-critical
points of $P$, and tangent to the gradient of the Green function of $P$.
An external ray of argument $\theta\in\R/\Z$ is denoted by $R_P(\theta)$.

\begin{lem}[Lemma B.1 \cite{GM}] \label{l:rep}
Let $f$ be a polynomial, and $z$ be a repelling periodic point of
$f$. If a smooth \emph{periodic} ray $R_f(\theta)$ lands at $z$,
then, for every polynomial $g$ sufficiently close to $f$, the ray
$R_{g}(\theta)$ is also smooth, lands at a repelling periodic point
$w$ close to $z$, and $w$ depends holomorphically on $g$.
\end{lem}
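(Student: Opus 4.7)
The plan is to combine holomorphic dependence of the K\"onigs linearization near $z$ with a compactness argument along the rest of the ray. First, choose $k\ge 1$ with $f^{\circ k}(z)=z$ and $\sigma_d^{\circ k}(\theta)=\theta$, where $d=\deg f$; such $k$ exists because $R_f(\theta)$ is periodic and lands at a periodic point. Applying the implicit function theorem to the equation $g^{\circ k}(w)-w=0$ near $(f,z)$ yields a holomorphic map $g\mapsto w(g)$ with $w(f)=z$ producing a repelling periodic point of $g$ whose multiplier $\mu(g)=(g^{\circ k})'(w(g))$ also depends holomorphically on $g$.

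Second, I would invoke a parametric version of K\"onigs linearization: on some topological disk $V_g\ni w(g)$ there is a biholomorphism $\phi_g\colon V_g\to\disk$ with $\phi_g(w(g))=0$ and $\phi_g\circ g^{\circ k}=\mu(g)\cdot\phi_g$, depending holomorphically on $g$. Since $R_f(\theta)$ lands at $z$, a tail of it in $V_f$ pushes forward under $\phi_f$ to a logarithmic spiral (a straight ray when $\mu(f)$ is real positive) $\Lambda$ through $0$ invariant under multiplication by $\mu(f)$. For $g$ close to $f$, let $\Lambda_g$ be the analogous invariant spiral for $\mu(g)$ in the $\phi_g$-plane; then $\rho_g:=\phi_g^{-1}(\Lambda_g)\subset V_g$ is a smooth arc landing at $w(g)$ that varies continuously with $g$, producing the local piece of the candidate ray $R_g(\theta)$ together with its landing point.

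Third, I would extend $\rho_g$ outward by successive pullbacks under $g^{\circ k}$. The portion of $R_f(\theta)$ joining its tail in $V_f$ to a fixed neighborhood of $\infty$ is a compact smooth arc that, by definition of smoothness, has positive distance from the (finitely many) pre-critical points of $f^{\circ k}$ that it would otherwise meet. Since the pre-critical set of $g^{\circ k}$ moves continuously with $g$, and since pullbacks of a smooth arc under $g^{\circ k}$ are holomorphically controlled branches of $(g^{\circ k})^{-1}$ along the arc, for all $g$ sufficiently close to $f$ this compact portion deforms continuously to the corresponding piece of $R_g(\theta)$ while remaining smooth. Near $\infty$, the B\"ottcher coordinate $\psi_g$ at $\infty$ exists on a fixed neighborhood of $\infty$ independent of $g$ and varies holomorphically in $g$; there $R_g(\theta)$ is given as $\psi_g^{-1}$ of the straight ray of argument $\theta$.

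Gluing these three pieces yields a smooth ray $R_g(\theta)$ from $\infty$ that lands at $w(g)$, and the holomorphic dependence of $w(g)$ on $g$ was established in the first step. The main obstacle is the third step: ensuring that along the compact middle portion the approximating ray avoids the pre-critical orbit of $g^{\circ k}$ uniformly in a neighborhood of $f$. This is handled by combining smoothness of $R_f(\theta)$, which provides positive distance from the pre-critical set of $f^{\circ k}$ on every compact sub-arc, with the continuity of (pre-)critical points in $g$.
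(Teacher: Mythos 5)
The paper quotes Lemma~\ref{l:rep} from \cite{GM} (Lemma B.1, attributed to Douady--Hubbard) without proof, so there is no in-paper argument to compare against. Your three-piece strategy (B\"ottcher coordinate near $\infty$, compactness/continuity on a middle arc, K\"onigs linearization near the landing point) is the standard one, and Steps~1 and~3 are essentially sound.

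There is, however, a genuine gap in Step~2, which is exactly where the landing of $R_g(\theta)$ at $w(g)$ must be established. The set $\phi_f\bigl(R_f(\theta)\cap V_f\bigr)$ is a $\mu(f)$-invariant curve accumulating at $0$, but it is \emph{not} in general a logarithmic spiral: the K\"onigs chart linearizes the dynamics, not the geometry of the external ray. Consequently there is no canonical ``analogous invariant spiral $\Lambda_g$'' for $\mu(g)$, and the arc $\rho_g=\phi_g^{-1}(\Lambda_g)$ has no a priori relation to the actual ray $R_g(\theta)$ propagated in from infinity. Declaring $\rho_g$ to be ``the local piece of the candidate ray'' and then ``gluing'' it to the middle arc is exactly the assertion that needs to be proved; as written it is circular. (Also, ``extend $\rho_g$ outward by successive pullbacks under $g^{\circ k}$'' is reversed: pullbacks move \emph{toward} the Julia set.)

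The missing ingredient is a fundamental-domain estimate that ties the middle arc to the landing point. Choose a Green level $\rho_0>0$ small enough that $A^f:=R_f(\theta)\cap\{\rho_0/d^{k}\le G_f\le\rho_0\}$ is compactly contained in $V_f$ and $\phi_f(A^f)$ lies in a compact annulus $\{\varepsilon\le|w|\le 1/|\mu(f)|-\varepsilon\}$. Your middle-arc continuity argument then shows that, for $g$ near $f$, the corresponding arc $A^g=R_g(\theta)\cap\{\rho_0/d^{k}\le G_g\le\rho_0\}$ is close to $A^f$, so $\phi_g(A^g)$ still lies in a compact annulus bounded away from $0$ and from $1/|\mu(g)|$. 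Since $\sigma_d^{\circ k}(\theta)=\theta$, the tail of $R_g(\theta)$ below level $\rho_0/d^{k}$ is the union of the preimages of $A^g$ under the local branch of $(g^{\circ k})^{-1}$ fixing $w(g)$; in the linearizing coordinate these are $\mu(g)^{-j}\phi_g(A^g)$, $j\ge 1$, which shrink geometrically to $0$. This shows that $R_g(\theta)$ stays inside $V_g$, lands at $w(g)$, and varies continuously; it is this estimate, not the ill-defined $\Lambda_g$, that closes the argument.
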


Lemma~\ref{l:converge} now easily follows.

\begin{lem}\label{l:converge}
Suppose that $h_n\to h$ is an infinite sequence of polynomials of
degree $d$, and $\{\al, \be\}$ is a pair of periodic arguments such
that the external rays $R_{h_n}(\al)$, $R_{h_n}(\be)$ land at the
same repelling periodic point $x_n$ of $h_n$. If $R_h(\al)$,
$R_h(\be)$ are smooth but do not land at the same periodic point of $h$, then one
of these two rays must land at a parabolic point of $h$.
\end{lem}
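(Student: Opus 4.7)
The plan is to argue by contradiction using Lemma~\ref{l:rep} (the Douady--Hubbard stability of rays landing at repelling periodic points). Suppose neither $R_h(\al)$ nor $R_h(\be)$ lands at a parabolic point of $h$. Since $\al$ and $\be$ are periodic arguments and the rays $R_h(\al), R_h(\be)$ are smooth, they are smooth periodic rays. By the classical landing theorem of Douady--Hubbard for smooth periodic rays, each must land, and the landing point is a periodic point that is either repelling or parabolic. Under our contradiction hypothesis, both landing points are then repelling: say $R_h(\al)$ lands at a repelling periodic $y_\al$ and $R_h(\be)$ lands at a repelling periodic $y_\be$.

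Next I would apply Lemma~\ref{l:rep} at each of the two repelling periodic points $y_\al$ and $y_\be$. For all sufficiently large $n$ (so that $h_n$ is close enough to $h$), the ray $R_{h_n}(\al)$ is smooth and lands at a repelling periodic point $y_{\al,n}$ depending holomorphically on $h_n$ and converging to $y_\al$; likewise $R_{h_n}(\be)$ lands at a repelling periodic $y_{\be,n}\to y_\be$. But by hypothesis both $R_{h_n}(\al)$ and $R_{h_n}(\be)$ land at the same point $x_n$, so $y_{\al,n}=x_n=y_{\be,n}$ for all such $n$. Passing to the limit as $n\to\infty$, we conclude $y_\al = y_\be$, contradicting the assumption that $R_h(\al)$ and $R_h(\be)$ do not land at the same periodic point of $h$. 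Hence at least one of the two rays must land at a parabolic point of $h$.

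There is no real technical obstacle here; the only point that needs some care is verifying the applicability of Lemma~\ref{l:rep}, namely that the hypothesized smoothness of $R_h(\al)$ and $R_h(\be)$ plus periodicity of $\al,\be$ forces each ray to land at a repelling or parabolic periodic point of $h$. This is the standard Douady--Hubbard dichotomy for smooth periodic rays and is implicit in the set-up already used in the paper. Once this is in hand, the perturbation argument via Lemma~\ref{l:rep} gives the claim in a single step.
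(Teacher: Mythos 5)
Your proof is correct and takes essentially the same route as the paper: rule out Cremer landing via the periodic-ray landing theorem (the paper invokes the Snail Lemma directly), then use Lemma~\ref{l:rep} to show that if both rays landed at repelling periodic points, the perturbed landing points would have to split $x_n$ into two distinct points, contradicting the hypothesis.
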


\begin{proof}
We may assume that $x_n$ converge to an $h$-periodic point $x$. If
both rays $R_h(\al)$, $R_h(\be)$ land at distinct \emph{repelling}
periodic points, then, by Lemma~\ref{l:rep}, we get a contradiction
with the fact that $R_{h_n}(\al)$, $R_{h_n}(\be)$ land at $x_n$ and
$x_n\to x$. Since $\al$ and $\be$ are periodic, by the Snail Lemma, $R_h(\al)$ and
$R_h(\be)$ cannot land at a Cremer point. Hence one of the rays
$R_h(\al)$, $R_h(\be)$ must land at a parabolic periodic point.
\end{proof}

The next lemma deals with much more specific perturbations.

\begin{lem}\label{l:pertb0}
Let $f\in \Fc_{nr}$ be potentially renormalizable such that the
point $0$ is parabolic. Suppose that $f^{\circ k}(\om_2(f))$ belongs
to a parabolic Fatou domain at $0$ for some $k>0$. Then there are
infinitely many repelling periodic cutpoints in $J(f)$.
\end{lem}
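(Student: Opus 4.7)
The plan is to use the perturbations $g_n = g_{f,\eps_n}$ supplied by Corollary~\ref{c:near} to manufacture polynomials with abundant repelling periodic cutpoints, and then to transfer those cutpoints back to $f$ using the ray-landing persistence encoded in Lemma~\ref{l:converge}.

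First, take $\eps_n \to 0$ and let $g_n = g_{f,\eps_n}$. Corollary~\ref{c:near} tells us each $g_n$ has locally connected Julia set and satisfies the hypothesis of Corollary~\ref{c:infperio}; hence $J(g_n)$ contains infinitely many repelling periodic cutpoints. Since any cubic polynomial has only finitely many periodic points of any fixed period, the periods of these cutpoints are unbounded. At each such cutpoint of $J(g_n)$, at least two smooth periodic external rays $R_{g_n}(\al)$, $R_{g_n}(\be)$ land, by the Douady--Hubbard landing theorem for repelling periodic points in locally connected Julia sets.

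Next, observe that $J(f)$ is connected: by Corollary~\ref{c:1crit} the principal critical point $\om_1(f)$ lies in a parabolic domain at $0$, and by hypothesis $\om_2(f)$ is eventually mapped into such a domain, so both critical points belong to $K(f)$. Thus every periodic external ray $R_f(\ta)$ is smooth and lands at some periodic point of $f$. Fix $n$ and associate to each cutpoint of $J(g_n)$ a pair of periodic arguments $(\al,\be)$ as above. Apply Lemma~\ref{l:converge} to the sequence $g_m \to f$: either $R_f(\al)$ and $R_f(\be)$ land at a common periodic point of $f$, which, by the Snail Lemma and by the non-landing of rays at attracting or Siegel points, must be repelling or parabolic, or one of them lands at a parabolic periodic point of $f$.

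Since both critical orbits of $f$ are absorbed by the parabolic cycle at $0$ (one directly by Corollary~\ref{c:1crit}, the other by hypothesis), the point $0$ is the only non-repelling periodic point of $f$, and only finitely many external rays can land on this single parabolic cycle. Consequently only finitely many of the pairs $(\al,\be)$ can fall into the parabolic case, while all remaining (infinitely many) pairs yield common \emph{repelling} landing points --- that is, repelling periodic cutpoints of $J(f)$. Because the pairs arise from cutpoints of $J(g_n)$ of unbounded period, the resulting repelling cutpoints of $J(f)$ also have unbounded period, hence there are infinitely many of them. The principal technical point is controlling the finitely many exceptional pairs in which a ray lands at a parabolic rather than repelling periodic point; this is exactly where the uniqueness of the non-repelling cycle of $f$ does the essential work.
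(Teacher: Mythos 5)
Your overall strategy matches the paper's: perturb $f$ to maps $g_n=g_{f,\eps_n}$ with abundant repelling periodic cutpoints, extract pairs of periodic ray arguments landing together for $g_n$, then push the landing pattern back to $f$ via Lemma~\ref{l:converge}, discarding the finitely many pairs whose rays could end up at the (unique) parabolic cycle at $0$. The finiteness observation about rays landing at the parabolic cycle is a fine way to dispose of the exceptional cases, and your explicit remark that $0$ is the only non-repelling cycle is a useful addition.

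However, there is a genuine gap in the middle step. You fix one $n$, collect pairs $(\al,\be)$ from the cutpoints of $J(g_n)$, and then invoke Lemma~\ref{l:converge} for the sequence $g_m\to f$. But the hypothesis of Lemma~\ref{l:converge} is that \emph{for every} $m$ in the sequence, the rays $R_{g_m}(\al)$ and $R_{g_m}(\be)$ land at the same repelling periodic point $x_m$ of $g_m$. You have only verified this for $m=n$. There is no a priori reason why a pair of angles co-landing for $g_n$ must co-land for $g_m$ with $m\ne n$; the one-parameter family $\eps\mapsto g_{f,\eps}$ could pass through parabolic parameters where the ray configuration reshuffles. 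The paper fills exactly this hole by imposing a second requirement on the sequence, namely that the $g_n$ are pairwise topologically conjugate on their Julia sets, and justifies this by observing that only finitely many Thurston invariant laminations model polynomials with property $(1)$, so one can pass to a subsequence with constant combinatorics. With that done, the pairs $(\al_i,\be_i)$ co-land at the corresponding repelling cutpoints for \emph{all} $n$ simultaneously, and Lemma~\ref{l:converge} genuinely applies. Without some argument of this kind, your invocation of Lemma~\ref{l:converge} is unsupported.
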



\begin{proof}
By Corollary~\ref{c:near}, we can choose a sequence $\eps_n\to 0$ so that
the maps $g_n=g_{f\hspace{-1pt},\,\eps_n}$ have the
following properties:
\begin{enumerate}
\item each $g_n$ has an attracting fixed point $0$ and a critical point
$d_n\approx \om_2(f)$ such that $d_n\notin A(g_n)$ and $g_n^{\circ k}(d_n)\in A(g_n)$;
\item the maps $g_n$ are pairwise
topologically conjugate on their Julia sets.
\end{enumerate}
The second requirement can be fulfilled because for any sequence
$g_n$ satisfying $(1)$, the topological dynamics of $g_n:J(g_n)\to
J(g_n)$ can be of finitely many types (observe that there are
finitely many Thurston invariant laminations modeling polynomials
with property $(1)$).

Now, by Corollary~\ref{c:infperio}, there are infinitely many pairs of
periodic angles $\al_i$, $\be_i$ such that the external rays
$R_{g_n}(\al_i), R_{g_n}(\be_i)$ land at the same periodic repelling
cutpoint $x_i(g_n)$ of $J(g_n)$. By Lemma~\ref{l:converge}, this
implies that, for infinitely many subscripts $i$, the points
$x_i(g_n)$ converge to a repelling periodic cutpoint $x_i(f)$ of
$J(f)$, and all these points are distinct.
\end{proof}

We will need the following lemma.

\begin{lem}\label{l:noneut}
If $\Vc$ is a stable domain in $\Fc_\la$, then $f\in \Vc$ does not
have a periodic neutral point other than $0$.
\end{lem}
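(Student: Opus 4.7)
The plan is to argue by contradiction. Suppose $f\in\Vc$ has a periodic neutral point $z\neq 0$ of period $n$ with multiplier $\mu=(f^{\circ n})'(z)$, $|\mu|=1$. Assuming $\mu\neq 1$ (the case $\mu=1$ falls under the parabolic argument below), the point $z$ is a simple zero of $w\mapsto f^{\circ n}(w)-w$, so the implicit function theorem gives a unique holomorphic continuation $g\mapsto z(g)$ of the periodic point for $g$ in a neighborhood $\Uc\subset\Vc$ of $f$. The multiplier $\mu(g)=(g^{\circ n})'(z(g))$ is then a holomorphic function of the parameter $b$.

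Next I would apply the Ma\~n\'e-Sad-Sullivan theorem \cite{MSS}: on the stable component $\Vc$ the number of attracting cycles is locally constant and each persistent cycle keeps its attracting/repelling/neutral type under the holomorphic motion. Combined with the open mapping theorem this forces $\mu(g)$ to be constant near $f$: were $\mu$ non-constant, its image $\mu(\Uc)$ would be open in $\C$ and would therefore contain points of modulus strictly less than $1$; a nearby $g$ with $|\mu(g)|<1$ would carry an extra attracting cycle not extending back to $f$, contradicting stability. Analytic continuation then gives $\mu(g)\equiv\mu$ on all of $\Vc$.

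The last, and hardest, step is to rule out this persistently-neutral scenario. If $\mu$ is a root of unity, the cycle $z(g)$ is parabolic for every $g\in\Vc$; Douady-Hubbard parabolic implosion then produces new attracting cycles of period a multiple of $n$ under arbitrarily small perturbation in $\Fc_\la$. These bifurcating cycles are not the continuation of $z(g)$, so the constancy of $\mu(g)$ does not prevent their creation, and they violate the constancy of the attracting-cycle count on $\Vc$. If instead $\mu$ is irrational, the multiplier $\mu(g)$ is an algebraic function of $b$, and its constancy on the open set $\Vc$ forces it to be constant on its full analytic branch in $\Fc_\la\cong\C$. One then derives a contradiction by a direct algebraic computation; for example, a non-zero fixed point $z$ of $f_{\la,b}$ satisfies $z^2+bz+(\la-1)=0$ and has multiplier $-bz+3-2\la$, which is manifestly non-constant in $b$, and similar identities at higher periods show that no non-trivial cycle of $f_{\la,b}$ can have identically unimodular multiplier. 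The irrationally-neutral case is the delicate one, since $J$-stability is compatible \emph{in principle} with persistently-neutral cycles, so the proof must genuinely exploit that $\Fc_\la$ is a specific algebraic one-parameter family.
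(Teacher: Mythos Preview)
Your overall strategy coincides with the paper's: argue by contradiction, use $J$-stability to show that the neutral cycle persists throughout $\Vc$ with constant multiplier, and then derive a contradiction from that constancy. The paper's proof is in fact only two sentences and treats the final step (``the multiplier must be constant, a contradiction'') as self-evident, so most of your proposal is an attempt to justify what the paper leaves implicit.

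The problem is your parabolic branch. Parabolic implosion, and the bifurcation of nearby attracting cycles that accompanies it, is a phenomenon triggered by moving the \emph{multiplier} of the parabolic cycle off its root-of-unity value. But you have just shown that $\mu(g)\equiv\mu$ identically on $\Vc$, so the parabolic cycle is \emph{never} perturbed in the relevant sense and no new attracting cycles are created. The sentence ``the constancy of $\mu(g)$ does not prevent their creation'' is exactly backwards: constancy of the multiplier is precisely what blocks implosion. (The deferred case $\mu=1$ inherits the same difficulty, and in addition your implicit-function step does not apply there as written.)

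The good news is that your second argument---$\mu(g)$ is an algebraic function of $b$, constant on the open set $\Vc$, hence constant on its whole branch, which one rules out by direct computation or by the asymptotics as $b\to\infty$---works uniformly, regardless of whether $\mu$ is rational or irrational. Drop the case split and run that argument for every constant value of $\mu$; your fixed-point calculation is correct and already illustrates the mechanism. This is almost certainly the content of the paper's one-word ``contradiction''.
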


\begin{proof}
If $f$ is stable with a neutral periodic point $x=x(f)\ne 0$, then it follows that
all maps $g\in \Vc$ have a neutral periodic point $x(g)\ne 0$ of the same period. Clearly,
this implies that the multiplier of $x(g)$ for $g\in \Vc$ must be constant, a contradiction.
\end{proof}

We are ready to prove Lemma~\ref{l:no3types}.

\begin{lem}\label{l:no3types}
Let $\Wc$ be a component of $\thu(\Pc_\la)\sm \Pc_\la$. Then any
polynomial $f\in \Wc$ has connected Julia set $J(f)$ and has neither
repelling periodic cutpoints nor non-repelling periodic points
distinct from $0$. In particular, $\Wc$ is either of Siegel capture
type or of queer type.
\end{lem}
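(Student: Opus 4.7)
My plan is to reduce the lemma to the combination of Theorem~\ref{t:extendclo} (applied to $\Wc$ as a $\la$-stable component), Corollary~\ref{c:bdd-stab}, and Lemma~\ref{l:noneut}, then rule out the three ``bad'' dynamical types using the tools already built in this section.

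First, by Corollary~\ref{c:bdd-stab} every $f\in\Wc$ is stable, so Lemma~\ref{l:noneut} implies $f$ has no neutral periodic points other than $0$. Next, I would argue that $\Wc$ is in fact a full $\la$-stable component --- a component of $\Fc^{st}_\la$ --- since its topological boundary $\bd(\Wc)$ lies in $\bd(\Pc_\la)$ (no point of $\bd(\Wc)$ can be interior to $\Pc_\la$, as such a point would have a neighborhood in $\Pc_\la$ disjoint from $\Wc$), and such boundary polynomials carry non-repelling cycles with multiplier on the unit circle and are thus non-stable. Given this, Definition~\ref{d:exte} applies with $\La=\Wc$, so $[f]\in\ol\phd_3^e$ for every $f\in\Wc$, and Theorem~\ref{t:extendclo} then yields $[f]\in\cu$.

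Unpacking through Definition~\ref{d:cubio}: $f$ has connected Julia set $J(f)$, a non-repelling fixed point, no repelling periodic cutpoints in $J(f)$, and all non-repelling periodic points of $f$ except at most one fixed point have multiplier $1$. Combined with the absence of neutral periodic points off $0$ from Lemma~\ref{l:noneut}, the multiplier-$1$ cycles off $0$ are excluded, leaving $0$ as the only non-repelling periodic point of $f$. This establishes all three listed claims of the main statement.

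For the ``in particular'' conclusion, I match $f$ against the five dynamical types of Section~\ref{s:exteclosu}. The disjoint type requires an attracting cycle disjoint from $A(f)$, hence a non-repelling periodic point $\ne 0$, excluded by the above. The attracting capture type would force infinitely many repelling periodic cutpoints in $J(f)$ by Corollary~\ref{c:infperio}, contradicting the absence of repelling periodic cutpoints. The parabolic capture type would do the same via Lemma~\ref{l:pertb0}. So the only remaining possibilities are Siegel capture and queer. The main technical subtlety is the middle step, verifying that $\Wc$ is a full $\la$-stable component: this rests on showing that boundary polynomials of $\Pc_\la$ are non-stable, which should follow from standard bifurcation theory (such points carry either a neutral non-repelling cycle or a parabolic bifurcation preventing $J$-stability) but must be argued carefully to match the exact definition in Corollary~\ref{c:bdd-stab}.
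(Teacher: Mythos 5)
Your proposal diverges from the paper in the step you yourself flag as the ``main technical subtlety'': you assert that $\Wc$ is always a full $\la$-stable component, i.e.\ that every polynomial on $\bd(\Pc_\la)$ is unstable. This is not established anywhere in the paper, and in fact the paper is structured precisely to \emph{avoid} relying on it. The paper's proof of Lemma~\ref{l:no3types} splits into two cases: (i) $\Wc$ happens to coincide with a $\la$-stable component, in which case Definition~\ref{d:exte} and Theorem~\ref{t:extendclo} apply as you say; and (ii) $\Wc$ is a \emph{proper} subset of a larger stable component $\Vc$, which forces $\Vc$ to straddle $\bd(\Pc_\la)$ and hence to contain a stable polynomial $g\in\bd(\Wc)\subset\Pc_\la$. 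Case (ii) is then handled by using the quasi-symmetric conjugacy between $f|_{J(f)}$ and $g|_{J(g)}$ (for $f\in\Wc$, $g\in\Vc\cap\Pc_\la$) together with Theorem~\ref{t:prophd} and Lemma~\ref{l:noneut} to rule out repelling periodic cutpoints and non-repelling cycles other than $0$. The very existence of case (ii) in the authors' argument signals that your assertion is not automatic.

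Your heuristic for it -- ``boundary polynomials carry non-repelling cycles with multiplier on the unit circle and are thus non-stable'' -- is not a valid deduction in this setting: \emph{every} polynomial in $\Fc_\la$ with $|\la|=1$ already has a neutral fixed point at $0$, so the presence of a unit-modulus multiplier does not force $J$-instability relative to $\Fc_\la$. One would need to produce a non-repelling cycle \emph{off} $0$ (or some other instability mechanism) at an arbitrary $g\in\bd(\Pc_\la)$, and this is exactly what you cannot take for granted: a priori, a stable component (of queer or Siegel capture type, say) could intersect $\bd(\Pc_\la)$, which is the scenario covered by the paper's case (ii). Everything else in your plan -- the connectedness of $J(f)$ from non-escaping critical points, Corollary~\ref{c:bdd-stab}, Lemma~\ref{l:noneut}, and the elimination of the disjoint / attracting capture / parabolic capture types via Corollary~\ref{c:infperio} and Lemma~\ref{l:pertb0} -- matches the paper. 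But the omitted case (ii) is a genuine gap, not a detail to be ``argued carefully'' later.
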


\begin{proof}
Assume that $f\in \Wc$. Since the critical points of polynomials from $\bd(\Wc)$
are non-escaping, it follows that the critical points of $f$ are also non-escaping.
Hence $J(f)$ is connected. Consider now two possibilities. Recall that, by
Corollary~\ref{c:bdd-stab}, all maps in $\Wc$ are stable.

First, it may happen that $\Wc$ is a stable component. Then by
Definition~\ref{d:exte} where the extended closure $\ol{\phd}_3^e$ of
the cubic Principal Hyperbolic Domain $\ol{\phd}_3$ is defined it follows
that the polynomial $f$ belongs to $\ol{\phd}_3^e$.
Hence, by Theorem~\ref{t:prophd}, the polynomial $f$ cannot have
attracting periodic points distinct from $0$ or repelling periodic
cutpoints distinct from $0$ of its Julia set. On the other hand, by
Lemma~\ref{l:noneut}, the polynomial $f$ cannot have neutral periodic points distinct
from $0$. This completes the proof of the lemma in the case when $\Wc$
is a stable component.

Second, suppose that $\Wc$ is not a stable component. Then there is
a unique stable component $\Vc$ in $\Fc_\la$ such that
$\Vc\supsetneqq \Wc$. It follows that there is a polynomial $g\in
\bd(\Wc)\cap \Vc$; in other words, there is a polynomial $g\in\Vc$ such that
$[g]\in\ol\phd_3$, and the maps $f|_{J(f)}$ and $g|_{J(g)}$ are
quasi-symmetrically conjugate.

Suppose that $f$ has a repelling periodic cutpoint $x$ of its Julia
set. Clearly, $x\ne 0$. The corresponding periodic point $y$ of $g$
cannot be repelling by Theorem~\ref{t:prophd}. Since $f|_{J(f)}$ and
$g|_{J(g)}$ are conjugate, it follows that the only possibility for
$y\ne 0$ is that $y$ is a parabolic periodic point. However this
contradicts Lemma~\ref{l:noneut}. Hence $f$ has no repelling
periodic cutpoints of its Julia set. Moreover, by
Lemma~\ref{l:noneut}, the polynomial $f$ has no neutral periodic points distinct
from $0$.

Finally, suppose that $f$ has an attracting periodic point distinct
from $0$. Then the fact that $f$ and $g$ are quasi-conformally
conjugate on their Julia sets implies that $g$ has either (1) an
attracting periodic Fatou domain $U$ not containing $0$ or (2) a
parabolic periodic Fatou domain with a parabolic point $z\ne 0$ on
its boundary. Since $[g]\in \ol{\phd_3}$, case (1) is impossible by
Theorem~\ref{t:prophd}. On the other hand, case (2) is impossible by
Lemma~\ref{l:noneut}. This completes the proof of the first claims
of the lemma.

Let us use this to show that $\Wc$ is either of Siegel capture type
or of queer type. Indeed, if $f\in \Wc$ is of disjoint type, then $f$
has an attracting periodic point distinct from $0$, which is
impossible by the above. If $f$ is of attracting capture type, then,
by Corollary~\ref{c:infperio}, the set $J(f)$ contains infinitely
many periodic repelling cutpoints, again a contradiction with the
above. Finally, if $f$ is of parabolic capture type, then, by
Lemma~\ref{l:pertb0}, we see that $f$ has infinitely many periodic
repelling cutpoints, and we obtain the same contradiction with the
above. Therefore, $\Wc$ is either of Siegel capture type or of queer
type.
\end{proof}

The following theorem describes components of queer type.

\begin{thm}
  \label{t:nosiegel}
Let $\Wc$ be a bounded stable component of $\Fc_\la$ of queer type.
Then, for any polynomial $f\in\Wc$, the Julia set $J(f)$ has positive Lebesgue measure and carries an invariant line field.
\end{thm}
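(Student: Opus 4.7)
The plan is to apply the standard Ma\~n\'e--Sad--Sullivan/McMullen rigidity argument to the holomorphic motion furnished by the $J$-stability of $\Wc$ (Corollary~\ref{c:bdd-stab}). Since $\Wc$ is an open connected subset of the one-complex-dimensional slice $\Fc_\la$, it has positive complex dimension; fix a base point $f_0\in\Wc$. The equivariant holomorphic motion $\phi\colon\Wc\times J(f_0)\to\C$ extends, by the $\la$-lemma in S{\l}odkowski's form, to a holomorphic motion of $\C^*$ in which each $\phi_\tau$ is a quasiconformal homeomorphism satisfying $\phi_\tau\circ f_0=f_\tau\circ\phi_\tau$. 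Let $\mu_\tau$ denote the Beltrami coefficient of $\phi_\tau$; the conjugation equation forces $\mu_\tau$ to be an $f_0$-invariant Beltrami differential on $\C^*$, and $\tau\mapsto\mu_\tau$ is holomorphic from $\Wc$ into the open unit ball of $L^\infty(\C)$.

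Next, suppose for contradiction that $J(f_0)$ has Lebesgue measure zero. Then $\mu_\tau$ is essentially supported in the Fatou set of $f_0$. Because $\Wc$ is of queer type, $\om_2(f_0)\in J(f_0)$, so by Corollary~\ref{c:1crit} the only periodic Fatou component of $f_0$ (if any) is the one containing $\om_1(f_0)$, attached to $0$; all other Fatou components eventually map onto it. Hence the grand-orbit quotient of the Fatou set is a single Riemann surface whose conformal type is rigidly determined by $\la$ (with the Cremer case giving an empty Fatou set and $\mu_\tau\equiv 0$ immediately). The Teichm\"uller space of deformations of this quotient that preserve $\la$ is therefore zero-dimensional, and $\mu_\tau$ can be replaced by a cohomologous Beltrami differential vanishing on the Fatou set. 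Combined with the hypothesis that $J(f_0)$ has measure zero, this forces $\mu_\tau=0$ a.e., so $\phi_\tau$ is M\"obius; since a continuously varying family of M\"obius conjugacies between $f_0$ and polynomials in $\Fc_\la$ fixing $0$ and $\infty$ admits only finitely many values, $f_\tau=f_0$ throughout $\Wc$, contradicting $\dim_\C\Wc>0$.

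Hence $J(f_0)$ has positive Lebesgue measure. For any $\tau\in\Wc$ with $\phi_\tau$ not conformal, the restriction $\mu_\tau|_{J(f_0)}$ is a nontrivial $f_0$-invariant element of the unit ball of $L^\infty(J(f_0))$, which is exactly an invariant line field on $J(f_0)$ in the sense of McMullen.

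The main obstacle is the rigidity step in the second paragraph: a case analysis over the character of $0$ as a fixed point of $f_0$ (attracting, parabolic, Siegel, or Cremer) showing that the Teichm\"uller deformations of the Fatou-set quotient inside $\Fc_\la$ are trivial. The decisive input is Corollary~\ref{c:1crit}, which pins $\om_1(f)$ in the periodic Fatou component at $0$ throughout $\Wc$, so that no additional critical Fatou dynamics contributes and the quotient is rigidly controlled by the single parameter $\la$.
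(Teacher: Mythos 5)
Your overall strategy (Słodkowski + Beltrami coefficient + measure-zero removability) is the same McMullen--Sullivan framework the paper uses, but there is a genuine gap at the single step the paper identifies as the hard one: the Siegel case.

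First, a factual error: you invoke Corollary~\ref{c:1crit} to conclude that "the only periodic Fatou component of $f_0$ (if any) is the one containing $\om_1(f_0)$." Corollary~\ref{c:1crit} is stated only for $0$ attracting or parabolic. If $0$ is a Siegel point, the unique bounded periodic Fatou component is the Siegel disk, and $\om_1(f_0)$ is \emph{not} in it (a rotation domain cannot contain a critical point); in fact $\om_1(f_0)\in J(f_0)$. So the premise on which your Teichm\"uller computation rests already fails in the Siegel case.

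Second, and more importantly, the assertion that "the Teichm\"uller space of deformations of the Fatou-set quotient that preserve $\la$ is zero-dimensional, and $\mu_\tau$ can be replaced by a cohomologous Beltrami vanishing on the Fatou set" is exactly what needs to be proved, and it is nontrivial precisely for Siegel disks. The holomorphic motion of $J(f_0)$ determines the boundary of the Siegel disk as a set, but the linearizing (Riemann) map of the Siegel disk is only determined up to a rotation, and there is no canonical normalization when the boundary is not locally connected and carries no critical point. One therefore cannot automatically propagate the conjugacy from $J(f_0)$ to a conformal (or Beltrami-trivial) conjugacy inside the Siegel disk depending holomorphically on the parameter. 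This is where the paper's proof invokes Zakeri's Theorem~3.4 (stated as Theorem~\ref{t:Zak}): Zakeri constructs, by a careful argument, a holomorphic motion of the Siegel disk compatible with the motion of the Julia set. Your proof omits this input, and without it the "trivialize $\mu_\tau$ on the Fatou set" step is unjustified in the Siegel case. (In the attracting, parabolic, Cremer, and basin-of-infinity cases your reasoning is sound and matches the paper's use of B\"ottcher and linearizing coordinates; there the normalization problem does not arise.) To complete the proof you should treat the Siegel case separately, citing Zakeri as the paper does, and note that the absence of other Siegel disks or non-repelling cycles away from $0$ is guaranteed by Lemma~\ref{l:no3types}.
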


The most difficult case of Theorem \ref{t:nosiegel} is covered by the following theorem of S. Zakeri:

\begin{thm}[\cite{Z}, Theorem 3.4]
\label{t:Zak}
Let $0<\theta<0$ be a Brjuno number, and set $\la=e^{2\pi i\theta}$.
Consider a polynomial $f$ from a queer component of $\Cc_\la\sm\bd(\Cc_\la)$.
Then  $J(f)$ pas positive Lebesgue measure and carries an invariant line field.
\end{thm}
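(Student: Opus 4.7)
My plan is to split into two cases according to the multiplier $\la$: the \emph{Brjuno Siegel case}, where $\la=e^{2\pi i\theta}$ with $\theta$ an irrational Brjuno number, and the \emph{remaining cases}, where $0$ is attracting, parabolic, or Cremer for $f$.

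For a non-Brjuno-Siegel $\la$, I would apply the standard quasiconformal deformation argument for a queer stable component. Fix $f_0 \in \Wc$; by Corollary~\ref{c:bdd-stab} combined with the theorem of Ma\~n\'e--Sad--Sullivan, there is a holomorphic motion $h: \Wc \times J(f_0) \to \C$ such that each $h_b: J(f_0) \to J(f_b)$ conjugates $f_0$ to $f_b$ on Julia sets. By Slodkowski's extension theorem, $h_b$ extends to a quasiconformal homeomorphism $H_b: \C \to \C$ whose Beltrami coefficient $\mu_b$ satisfies $f_0^*\mu_b = \mu_b$ almost everywhere on $J(f_0)$. By Lemma~\ref{l:no3types}, $A(f_0)$ is the only periodic Fatou component of $f_0$ and $\om_2(f_0) \in J(f_0)$. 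If $\mu_b$ vanished identically on $J(f_0)$ for every $b \in \Wc$, then $H_b$ would be conformal on $J(f_0)$; paired with the constraint that $H_b$ respects the standard B\"ottcher / K\"onigs / Leau--Fatou coordinates on the single Fatou basin $A(f_0)$, this would force $f_b$ to be affinely conjugate to $f_0$, contradicting that $\Wc$ is an open positive-dimensional subset of $\Fc_\la$. Hence for some $b$ the coefficient $\mu_b$ is non-trivial on $J(f_0)$. The resulting non-trivial $f_0$-invariant Beltrami differential is supported on a subset of $J(f_0)$ of positive Lebesgue measure, and its normalized direction $\mu_b/|\mu_b|$ on that support gives the required invariant line field.

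For the Brjuno Siegel case, the presence of a Siegel disk around $0$ contributes non-trivially to the quasiconformal deformation space of $f_0$, and the rigidity step above becomes delicate. This is precisely the case handled by Theorem~\ref{t:Zak}, which I would invoke directly after a short bookkeeping check that any bounded queer component of $\thu(\Pc_\la)\sm\Pc_\la$ in our sense is also a queer component of $\Cc_\la\sm\bd(\Cc_\la)$ in Zakeri's sense: both are bounded stable components of $\Fc_\la$ whose members $f$ have connected $J(f)$ and satisfy $\om_2(f)\in J(f)$.

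The main obstacle is the rigidity step in the Cremer case, since there we have no Douady--Hubbard-type combinatorial model for $J(f_0)$ and no local connectivity at the boundary of $A(f_0)$. I would complete it via the Bers--Royden tangent-space framework: the quotient of $f_0$-invariant Beltrami differentials on $J(f_0)$ by trivial ones injects into the tangent space of the stable component at $[f_0]$, so if this quotient were trivial then $\Wc$ would be zero-dimensional, contradicting its openness in $\Fc_\la$.
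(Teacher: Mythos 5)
The statement you were asked to prove is Theorem~\ref{t:Zak}, which the paper records as a citation of Zakeri \cite[Theorem~3.4]{Z}; it is not proved in the paper, but invoked as the ingredient that settles the hard case of Theorem~\ref{t:nosiegel}.

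Your proposal does not prove Theorem~\ref{t:Zak}. Its hypothesis — $\theta$ a Brjuno number and $\la=e^{2\pi i\theta}$ — forces $0$ to be a Siegel fixed point of $f$: Brjuno numbers are irrational, so the attracting and parabolic cases are excluded, and Brjuno/Yoccoz linearizability excludes the Cremer case. Thus your case split has exactly one branch that falls under the statement, and for that branch you write that you would ``invoke Theorem~\ref{t:Zak} directly.'' That is circular: the theorem is being used to prove itself, and all the substantive work in your first two paragraphs concerns cases the statement does not contain.

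You also have the difficulty analysis inverted. In the Cremer case (relevant to Theorem~\ref{t:nosiegel}, not to Theorem~\ref{t:Zak}), Lemma~\ref{l:no3types} rules out attracting and parabolic cycles and any neutral cycle other than $0$; a Cremer point has no Siegel disk, so $f$ has no bounded Fatou components and $K(f)=J(f)$. The equivariant holomorphic motion then extends over the complement of $K(f)$ simply by matching B\"ottcher coordinates, which is the \emph{easy} case — no Bers--Royden tangent-space argument is needed, and there is no ``$A(f_0)$'' whose boundary could fail to be locally connected. The genuinely delicate situation is exactly the Siegel case that Theorem~\ref{t:Zak} handles: a Siegel disk whose boundary is not locally connected and contains no critical point, where the linearizing coordinate has no canonical normalization with which to extend the motion equivariantly. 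So the one case you would need to argue is handled only by citation, and the case you flag as the main obstacle does not in fact present one.
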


Brjuno numbers are irrational numbers satisfying a certain number theoretic condition.
If $\theta$ is Brjuno, then any holomorphic germ at $0$ of the form $z\mapsto \la z+\dots$, where dots denote terms of degree 2 and higher, is linearizable \cite{Yoc}.
The converse is known only for quadratic polynomials.
The proof of \cite[Theorem 3.4]{Z} does not use the Brjuno condition.
It works verbatim for any stable component $\Wc$ of $\Fc_\la$ such that polynomials
$f\in\Wc$ have a Siegel disk around $0$ (obviously, this condition does not depend
on the choice of $f$ within $\Wc$).

\begin{proof}[Proof of Theorem \ref{t:nosiegel}]
The argument follows the same lines as \cite{McS,Z}.
There is an equivariant holomorphic motion $\mu$ of the set $J(f)$ for $f\in\Wc$.
If we fix some $f_0\in\Wc$, then $\mu$ can be regarded as a map from $J(f_0)\times\Wc$
to $\C$ such that $\mu(z,f_0)=z$ for all $z\in J(f_0)$ and, for any fixed $z$, the
map $f\mapsto \mu(z,f)$ is holomorphic.
Suppose that $\mu$ can be extended to a holomorphic motion of the entire complex plane
in such a way that $z\mapsto \mu(z,f)$ is holomorphic on the Fatou set of $f_0$, for
any $f$ sufficiently close to $f_0$.
Then $J(f_0)$ must have positive Lebesgue measure (otherwise, by the Ahlfors Lemma,
the map $z\mapsto \mu(z,f)$ would be a global holomorphic conjugacy between $f_0$ and $f$).
An invariant line field on $J(f_0)$ is given by the pullback of the standard conformal
structure under the quasi-conformal map $z\mapsto \mu(z,f)$: say, one can take major axes
of the ellipses representing the described quasi-conformal structure.

The desired extension of $\mu$ is easy if there are no bounded Fatou components of $f_0$.
Indeed, in this case, it suffices to define $\mu(z,f)$ for $z$ outside of $K(f_0)$.
We set $\mu(z,f)$ to be a point in the dynamical plane of $f$, whose (suitably normalized) B\"ottcher coordinate with respect to $f$ is the same as the B\"ottcher coordinate of $z$ with respect to $f_0$.
If the Fatou set of $f_0$ has bounded components, then we can also extend $\mu$ to each
of the Fatou components by equating the (suitably normalized) linearizing coordinates.
The only problem is with Siegel disks, for which the linearizing coordinates have no obvious normalization, especially if the boundary of the Siegel disk is not locally connected and there are no critical points on the boundary.
This (most difficult) case is covered by \cite[Theorem 3.4]{Z} stated above as Theorem \ref{t:Zak}.
\end{proof}

Obviously, Lemma~\ref{l:no3types} and Theorem~\ref{t:nosiegel} imply
our Main Theorem.

\section*{Acknowledgements}
The authors are grateful to M. Lyubich for useful discussions. We
are also indebted to the referee for thoughtful remarks.

The first and the third named authors were partially supported by NSF
grant DMS--1201450. The second named author was partially  supported by
NSF grant DMS-0906316. The fourth named author was partially supported
by the RFBR grant 16-01-00748a.
The article was prepared within the framework of a subsidy granted to the
HSE by the Government of the Russian Federation for the implementation
of the Global Competitiveness Program.

\end{document}